\documentclass[11pt, a4paper]{article}
\usepackage{amsmath, amssymb, enumerate, amsthm, natbib, accents}
\usepackage{amsmath, amssymb, url, comment}
\usepackage{graphicx}
\usepackage{tikz}
\usetikzlibrary{arrows,positioning,plotmarks,external,patterns,angles,
	decorations.pathmorphing,backgrounds,fit,shapes,graphs,calc,spy}
\usepackage{pgfplotstable}

\newtheorem{Lemma}{Lemma}

\usepackage{typearea}
\typearea{12}

\makeatletter
\def\widebar{\accentset{{\cc@style\underline{\mskip10mu}}}}
\makeatother

\begin{document}

	\title{Information criteria for non-normalized models}
\author{Takeru~Matsuda\thanks{Statistical Mathematics Unit, RIKEN Center for Brain Science, e-mail: 
		\texttt{takeru.matsuda@riken.jp}},
	~Masatoshi~Uehara\thanks{Department of Computer Science, Cornell University}
	~and~Aapo~Hyv{\"a}rinen\thanks{Department of Computer Science, University of Helsinki}
}
	\date{}
	\maketitle
	
	\begin{abstract}
Many statistical models are given in the form of non-normalized densities with an intractable normalization constant. 
Since maximum likelihood estimation is computationally intensive for these models, 
several estimation methods have been developed which do not require explicit computation of the normalization constant, such as noise contrastive estimation (NCE) and score matching.
However, model selection methods for general non-normalized models have not been proposed so far.
In this study, we develop information criteria for non-normalized models estimated by NCE or score matching.
They are approximately unbiased estimators of discrepancy measures for non-normalized models.
Simulation results and applications to real data demonstrate that the proposed criteria enable selection of the appropriate non-normalized model in a data-driven manner.
	\end{abstract}
		\section{Introduction}\label{sec_intro}
	We consider here the estimation of parametric statistical models which are non-normalized (energy-based). 
	A non-normalized model is one in which the probability density does not integrate to unity. 
	Given a functional form $\widetilde{p}(x \mid \theta)$ for the parametrized density\footnote{In this paper, all measures are assumed to have densities (on $\mathbb{R}^d$ or its subset) and a measure and its density are identified without loss of generality for convenience. We explicitly write input arguments (such as $x,y,z$) of densities for clarity.}, the actual normalized model would be given by
	\begin{align}
		p(x \mid \theta) = \frac{1}{Z(\theta)} \widetilde{p}(x \mid \theta), \label{nnp}
	\end{align}
	where
	\begin{align}
		Z(\theta) = \int \widetilde{p}(x \mid \theta) {\rm d} x.
	\end{align}
	In the framework considered here, estimation of the parameters is attempted without computing the integral defining the normalization constant $Z(\theta)$, which is assumed to be too difficult to compute.
Many statistical models naturally have such property: for instance, Markov random field models \citep{Li}, directional distributions \citep{Mardia,Chikuse}, truncated Gaussian graphical models \citep{Lin}, network models \citep{Caimo}, and energy-based overcomplete independent component analysis models \citep{Teh}.
	Since maximum likelihood estimation is computationally intensive for such non-normalized models,
	several estimation methods have been developed which avoid calculation of the normalization constant.
	These methods include pseudo-likelihood \citep{Besag}, Monte Carlo maximum likelihood \citep{Geyer}, 
	contrastive divergence \citep{Hinton}, score matching \citep{SM} and noise contrastive estimation \citep{Gutmann}. 
	Among them, noise contrastive estimation (NCE) is applicable to general non-normalized models for both continuous and discrete data.
	In NCE, the normalization constant $Z(\theta)$ is estimated together with the unknown parameter $\theta$ by discriminating between data and artificially generated noise.
	On the other hand, score matching is a computationally efficient method for continuous data which is based on a trick of integration by parts.
	The idea of score matching has been generalized to the theory of proper local scoring rules \citep{Parry} and also applied to Bayesian model selection with improper priors \citep{Dawid,Shao}.
	Several studies extended score matching to discrete data \citep{SM2,Lyu}.
	Recently, Stein's method has been applied to estimation of non-normalized models \citep{Barp,Liu}.
	
Although non-normalized models enable more flexible modeling of data-generating processes, information criteria-based model selection methods have not been proposed for NCE and score matching, to the best of our knowledge.
	In general, model selection is the task of selecting a statistical model from several candidates based on data \citep{Burnham,Claeskens,Konishi08}, where different candidates can have different number of parameters.
	By selecting an appropriate model in a data-driven manner, we obtain better understanding of the underlying phenomena and also better prediction of future observations.
	\cite{Akaike} established a unified approach to model selection from the viewpoint of information theory and entropy.
	Specifically, he proposed Akaike Information Criterion (AIC) as a measure of the discrepancy between the true and estimated model in terms of the Kullback--Leibler divergence.
	Thus, the model with the minimum AIC is selected as the best model.
	AIC is widely used in many areas and has been extended by several studies \citep{Takeuchi,Konishi96,Kitagawa97,Spiegelhalter,Watanabe}.
	However, these existing information criteria assume that the model is normalized and thus they are not applicable to non-normalized models.
	
	In this study, we develop information criteria for non-normalized models estimated by NCE or score matching.
	For NCE, based on the observation that NCE is a projection with respect to a Bregman divergence \citep{Hirayama},
	we propose noise contrastive information criterion (NCIC) as an approximately unbiased estimator of the model discrepancy induced by this Bregman divergence. 
	Note that AIC \citep{Akaike} was developed as an approximately unbiased estimator of the Kullback-Leibler discrepancy. 
	Similarly, for score matching, we propose score matching information criterion (SMIC) as an approximately unbiased estimator of the model discrepancy induced by the Fisher divergence \citep{Lyu}. 
	Thus, the non-normalized model with the minimum NCIC or SMIC is selected as the best model.
	Experimental results show that these procedures successfully select the appropriate non-normalized model in a data-driven manner.
	Therefore, this study increases the practicality of non-normalized models.
	Note that \cite{Ji} and \cite{Varin} proposed information criteria based on the pseudo-likelihood and composite likelihood, respectively.
	Whereas their criteria are useful for discrete-valued data, our criteria are applicable to continuous-valued data, and NCIC is equally applicable to discrete-valued data.
	
	This paper is organized as follows.
	In Sections 2 and 3, we briefly review noise contrastive estimation (NCE) and score matching, respectively.
	In Section 4, we review the theory of Akaike information criterion (AIC) and Takeuchi information criterion (TIC).
	In Sections 5 and 6, we develop information criteria for non-normalized models estimated by NCE and score matching, respectively.
	In Section 7, we confirm the validity of NCIC and SMIC by numerical experiments.
	In Section 8, we apply NCIC and SMIC to real data of natural image, RNAseq and wind direction.
	In Section 9, we discuss extension of NCIC to non-normalized mixture models.
	In Section 10, we give concluding remarks.
	
	\section{Noise contrastive estimation (NCE)}\label{sec_NCE}
	In this section, we briefly review noise contrastive estimation (NCE), which is a general method for estimating non-normalized models.
	For more detail, see \cite{Gutmann12}. 
	
	\subsection{Procedure of NCE}
	Suppose we have $N$ i.i.d. samples $x^{(1)},\dots,x^{(N)}$ from a parametric distribution \eqref{nnp}.
	In NCE, we rewrite the non-normalized model \eqref{nnp} to
	\begin{align}
		\log p(x \mid \theta,c) = \log \widetilde{p} (x \mid \theta) + c, \label{NCEparam}
	\end{align}
	where $c=-\log Z(\theta)$.
	We regard $c$ as an additional parameter and estimate it together with $\theta$.
	Note that the final estimate $p(x \mid \hat{\theta},\hat{c})$ is not normalized in general.
	
	In addition to data $x^{(1)},\dots,x^{(N)}$ from the non-normalized model \eqref{nnp}, we generate $M$ i.i.d. noise samples $y^{(1)},\dots,y^{(M)}$ from a noise distribution with density $n(y)$.
	In practice, the noise distribution is usually chosen to be as close as possible to the true data distribution.
	For example, when the data is a random vector, the normal distribution with the same mean and covariance with data is often used as the noise distribution.
	Note that the noise distribution can be non-normalized itself, in which case MCMC can be employed for sampling $y^{(1)},\dots,y^{(M)}$ \citep{Chopin}.
	Then, we estimate $(\theta, c)$ by discriminating between the data and noise as accurately as possible:
	\begin{align}
		(\hat{\theta}_{{\rm NCE}},\hat{c}_{{\rm NCE}}) = {\rm arg} \min_{\theta,c} \hat{d}_{{\rm NCE}} (\theta,c), \label{NCEdef}
	\end{align}
	where 
	\begin{align}
		\hat{d}_{{\rm NCE}} (\theta,c) &= - \frac{1}{N} \sum_{t=1}^N \log \frac{N p(x^{(t)} \mid \theta,c)}{N p(x^{(t)} \mid \theta,c)+M n(x^{(t)})} - \frac{1}{N} \sum_{t=1}^M \log \frac{M n(y^{(t)})}{N p(y^{(t)} \mid \theta,c)+M n(y^{(t)})}. \label{Jdef}
	\end{align}
	The objective function $\hat{d}_{{\rm NCE}}$ is the negative log-likelihood of the logistic regression classifier.
	In other words, each term in $\hat{d}_{{\rm NCE}}$ is the log-probability of the class posterior in a two-class mixture model with a class prior $N $ to $M$ and class distributions $p$ and $n$.
	Note that $\hat{c}_{{\rm NCE}} \neq -\log Z(\hat{\theta}_{{\rm NCE}})$ and so the model $p(x \mid \hat{\theta}_{{\rm NCE}}, \hat{c}_{{\rm NCE}})$ estimated by NCE is not exactly normalized for a finite sample.
	NCE has consistency and asymptotic normality under mild regularity conditions \citep{Gutmann12,Chopin}.
Note that an idea similar to NCE has been employed in the context of biased sampling \citep{Qin}.
	
	\subsection{Bregman divergence related to NCE}
	Here, we explain the observation by \cite{Hirayama} that NCE is interpreted as a projection with respect to a Bregman divergence.
	
	We first review the relationship between the maximum likelihood estimator (MLE) and the Kullback--Leibler divergence.
	For two probability distributions $q(x)$ and $p(x)$, the Kullback--Leibler divergence $D_{{\rm KL}} (q,p)$ and Kullback--Leibler discrepancy $d_{{\rm KL}} (q,p)$ from $q(x)$ to $p(x)$ are defined as
	\begin{align*}
		D_{{\rm KL}} (q,p) = \int q(x) \log \frac{q(x)}{p(x)} {\rm d} x, \quad d_{{\rm KL}} (q,p) = - \int q(x) \log {p(x)} {\rm d} x,
	\end{align*}
	respectively.
	Note that
	\begin{align}
		D_{{\rm KL}} (q,p) = \int q(x) \log {q(x)} {\rm d} x + d_{{\rm KL}} (q,p).
	\end{align}
	For $x^{(1)},\dots,x^{(N)} \sim p(x \mid \theta)$, the MLE is defined as
	\begin{align}
		\hat{\theta}_{{\rm MLE}} = {\rm arg} \max_{\theta} \sum_{t=1}^N \log p(x_t \mid \theta).
	\end{align}
	Let $\hat{q}(x)$ be the empirical distribution of $x^{(1)},\dots,x^{(N)}$ and denote $p(x \mid \theta)$ by $p_{\theta}$.
	Then,
	\begin{align*}
		d_{{\rm KL}} (\hat{q},p_{\theta}) = -\frac{1}{N} \sum_{t=1}^N \log {p(x_t \mid \theta)}.
	\end{align*}
	Therefore, the MLE minimizes the Kullback--Leibler discrepancy between the empirical distribution and the model:
	\begin{align}
		\hat{\theta}_{{\rm MLE}} = {\rm arg} \min_{\theta} {d}_{{\rm KL}} (\hat{q},p_{\theta}).
	\end{align}
	In this sense, the MLE is interpreted as a projection with respect to the Kullback--Leibler divergence.
	
	Now, we present the analogous result for NCE, which is a special case of the general discussion by \cite{Hirayama}.
	Consider a Bregman divergence between two nonnegative measures $q$ and $p$ defined as
	\begin{align}
		D_{{\rm NCE}} (q,p) = \int d_f \left( \frac{q(x)}{n(x)}, \frac{p(x)}{n(x)} \right) n(x) {\rm d} x,
	\end{align}
	where $n(x)$ is a probability density, $d_f (a,b) = f(a) - f(b) - f'(b) (a-b)$ and
	\begin{align}
		f(x) = x \log x - \left( \frac{M}{N}+x \right) \log \left( 1+\frac{N}{M} x \right). \label{Bregman_f}
	\end{align}
	This divergence is decomposed as
	\[
	D_{{\rm NCE}} (q,p) = g(q) + d_{{\rm NCE}} (q,p),
	\]
	where $g(q)$ is a quantity depending only on $q$ and
	\begin{align}
		d_{{\rm NCE}} (q,p) &= -\int q(x) \log \frac{N p(x)}{N p(x) + M n(x)} {\rm d} x - \frac{M}{N} \int n(y) \log \frac{M n(y)}{N p(y) + M n(y)} {\rm d} y. \label{Jqp}
	\end{align}
	Note that $d_{{\rm NCE}} (q,p)=0$ if and only if $q=p$ since $f$ is strictly convex. 
	Then, the objective function $\hat{d}_{{\rm NCE}} (\theta,c)$ of NCE in \eqref{Jdef} satisfies
	\begin{align}
		{\rm E}_y \{ \hat{d}_{{\rm NCE}} (\theta,c) \} = d_{{\rm NCE}} (\hat{q},p_{\theta,c}),
	\end{align}
	where $\hat{q}$ is the empirical distribution of $x^{(1)},\dots,x^{(N)}$, $p_{\theta,c} = p(\cdot \mid \theta,c)$, and ${\rm E}_y$ denotes the expectation with respect to noise samples $y^{(1)},\dots,y^{(M)}$.
	Thus, NCE is interpreted as minimizing the discrepancy $d_{{\rm NCE}} (\hat{q}, p_{\theta,c})$ between the empirical distribution $\hat{q}(x)$ and the model distribution $p(x \mid \theta,c)$.
	Although we can adopt $f$ other than \eqref{Bregman_f}, \cite{Uehara} showed that \eqref{Bregman_f} minimizes the asymptotic variance of the estimator among the class of twice continuously differentiable convex functions.
	
	\section{Score matching}\label{sec_SM}
	In this section, we briefly review the score matching estimator \citep{SM}, which is a computationally efficient estimation method for non-normalized models of continuous data.
	
	The score matching method is based on a divergence called the Fisher divergence \citep{Lyu,Hirayama}.
	For two probability distributions $q$ and $p$ on $\mathbb{R}^d$, the Fisher divergence is defined as
	\[
	D_{{\rm F}} (q,p) = \int \sum_{i=1}^d \left\{ \frac{\partial}{\partial x_i} \log q(x) - \frac{\partial}{\partial x_i} \log p(x) \right\}^2 q(x) {\rm d} x.
	\]
	By using integration by parts, it is transformed as $D_{{\rm F}} (q,p) = g(q) + d_{{\rm SM}} (q,p)$, where $g(q)$ is a quantity depending only on $q$ and
	\begin{align}
		d_{{\rm SM}} (q,p) = \int \left[ 2 \sum_{i=1}^d \frac{\partial^2}{\partial x_i^2} \log p(x)+\sum_{i=1}^d \left\{ \frac{\partial}{\partial x_i} \log p(x) \right\}^2 \right] q(x) {\rm d} x. \label{dSM}
	\end{align}
	
	Now, suppose we have $N$ i.i.d.~samples $x^{(1)},\dots,x^{(N)}$ from an unknown distribution $q(x)$ and fit the non-normalized model \eqref{nnp}.
	Then, an unbiased estimator of $d_{{\rm SM}} (q,p_{\theta})$ in \eqref{dSM} is obtained as
	\begin{align*}
		\hat{d}_{{\rm SM}} (\theta) = \frac{1}{N} \sum_{t=1}^N \rho_{{\rm SM}}(x^{(t)},\theta), 
	\end{align*}
	where
	\begin{align*}
		\rho_{{\rm SM}}(x,\theta) = 2 \sum_{i=1}^d \frac{\partial^2}{\partial x_i^2} \log \widetilde{p}(x \mid \theta)+\sum_{i=1}^d \left\{ \frac{\partial}{\partial x_i} \log \widetilde{p}(x \mid \theta) \right\}^2.
	\end{align*}
	Importantly, we do not need $Z(\theta)$ for computing $\hat{d}_{{\rm SM}} (\theta)$.
	Thus, the score matching estimator is defined as
	\[
	\hat{\theta}_{{\rm SM}} = {\rm arg} \min_{\theta} \hat{d}_{{\rm SM}} (\theta).
	\]
	This estimator has consistency and asymptotic normality under mild regularity conditions \citep{SM}.
	
	\cite{SM2} extended score matching to non-normalized models on  $\mathbb{R}_+^d = [0,\infty)^d$ by considering the divergence
	\[
	D_{{\rm F+}} (q,p) = \int_{\mathbb{R}_+^d} \sum_{i=1}^d \left\{ x_i \frac{\partial}{\partial x_i} \log q(x) - x_i \frac{\partial}{\partial x_i} \log p(x) \right\}^2 q(x) {\rm d} x.
	\]
	Through a similar argument to the original score matching, the score matching estimator for non-negative data is defined as
	\[
	\hat{\theta}_{{\rm SM+}} = {\rm arg} \min_{\theta} \hat{d}_{{\rm SM+}} (\theta),
	\]
	where
	\begin{align*}
		\hat{d}_{{\rm SM+}} (\theta) = \frac{1}{N} \sum_{t=1}^N \rho_{{\rm SM+}}(x^{(t)}, \theta),
	\end{align*}
	\begin{align*}
		\rho_{{\rm SM+}} (x, \theta)= \sum_{i=1}^d \left[ 2 x_i \frac{\partial}{\partial x_i} \log \widetilde{p}(x \mid \theta) + x_i^2 \frac{\partial^2}{\partial x_i^2} \log \widetilde{p}(x \mid \theta) + x_i^2 \left\{ \frac{\partial}{\partial x_i} \log \widetilde{p}(x \mid \theta) \right\}^2 \right]. 
	\end{align*}
	See \cite{Yu18,Yu} for recent developments of non-negative score matching. 
	
	For exponential families, the objective functions of the score matching estimators reduce to quadratic forms \citep{SM2,Forbes}.
	Specifically, for an exponential family
	\begin{align*}
		p(x \mid \theta) = h(x) \exp \left\{ \sum_{k=1}^m \theta_k T_k(x) - \psi(\theta) \right\} 
	\end{align*}
	on $\mathbb{R}^d$ or $\mathbb{R}_+^d$, the function $\rho_{{\rm SM}} (x,\theta)$ or $\rho_{{\rm SM+}} (x,\theta)$ is given by a quadratic form
	\begin{align}
		\frac{1}{2} \theta^{\top} \Gamma(x) \theta + g(x)^{\top} \theta + c(x). \label{exp_SM}
	\end{align}
	For the exact forms of $\Gamma(x)$, $g(x)$ and $c(x)$, see \cite{Lin}\footnote{Note that $x_{ij}^2$ is missing in the first term of (2.15) in \cite{Lin}.}.
	Thus, the score matching estimator is obtained by solving the linear equation $\left\{ \sum_{t=1}^N \Gamma(x^{(t)}) \right\} \hat{\theta} + \sum_{t=1}^N g(x^{(t)}) = 0$. 
	
	\section{Akaike information criterion (AIC)}\label{sec_AIC}
	In this section, we briefly review the theory of Akaike information criterion.
	For more details, see \cite{Burnham} and \cite{Konishi08}.
	
	Suppose we have $N$ independent and identically distributed (i.i.d.) samples $x^N=(x^{(1)},\dots,x^{(N)})$ from an unknown distribution $q(x)$.
	Based on them, we predict the future observation $z$ from $q(z)$ by using a predictive distribution.
	For this aim, we assume a parametric distribution $p(x \mid \theta)$ with an unknown parameter $\theta \in \mathbb{R}^k$ and construct a predictive distribution $p(z \mid \hat{\theta}_{{\rm MLE}}(x^N))$, where $\hat{\theta}_{{\rm MLE}} (x^N)$ is the maximum likelihood estimate of $\theta$ from $x^N$.
	Then, the distance between the true distribution $q(z)$ and the predictive distribution $p(z \mid \hat{\theta}_{{\rm MLE}}(x^N))$ is evaluated by the Kullback--Leibler divergence
	\begin{align*}
		D_{{\rm KL}} \{ q,\hat{\theta}_{{\rm MLE}} (x^N) \} &= \int q(z) \log \frac{q(z)}{p \{ z \mid \hat{\theta}_{{\rm MLE}} (x^N) \}} {\rm d} z.
	\end{align*}
	The Kullback--Leibler divergence is decomposed as
	\begin{align}
		D_{{\rm KL}} \{ q,\hat{\theta}_{{\rm MLE}} (x^N) \} &= {\rm E}_z \{ \log q(z) \} + d_{{\rm KL}} \{ q,\hat{\theta}_{{\rm MLE}} (x^N) \}, \label{KLdecomp}
	\end{align}
	where ${\rm E}_z$ denotes the expectation with respect to $z \sim q(z)$ and $d_{{\rm KL}} \{ q,\hat{\theta}_{{\rm MLE}} (x^N) \} = - {\rm E}_z [\log p \{ z \mid \hat{\theta}_{{\rm MLE}} (x^N) \}]$ is the Kullback--Leibler discrepancy from the true distribution $q(z)$ to the predictive distribution $p \{ z \mid \hat{\theta}_{{\rm MLE}}(x^N) \}$.
	Since the first term ${\rm E}_z \{ \log q(z) \}$ in \eqref{KLdecomp} does not depend on $\hat{\theta}_{{\rm MLE}} (x^N)$, information criteria are developed as approximately unbiased estimators of the expected Kullback--Leibler discrepancy ${\rm E}_x [d_{{\rm KL}} \{ q,\hat{\theta}_{{\rm MLE}} (x^N) \} ]$, where ${\rm E}_x$ denotes the expectation with respect to $x^{(1)},\dots,x^{(N)} \sim q(x)$.
	
	Let $\hat{q}$ be the empirical distribution of $x^{(1)},\dots,x^{(N)}$.
	Then, the quantity 
	\begin{align}
		d_{{\rm KL}} \{ \hat{q},\hat{\theta}_{{\rm MLE}}(x^N) \} = -\frac{1}{N} \sum_{t=1}^N \log p \{ x^{(t)} \mid \hat{\theta}_{{\rm MLE}}(x^N) \} \label{crude}
	\end{align}
	can be considered as an estimator of ${\rm E}_x [d_{{\rm KL}} \{ q,\hat{\theta}_{{\rm MLE}} (x^N) \} ]$.
	However, this simple estimator has negative bias, because the maximum likelihood estimate $\hat{\theta}_{{\rm MLE}}(x^N)$ is defined to minimize $d_{{\rm KL}}(\hat{q},\theta)$:
	\begin{align}
		\hat{\theta}_{{\rm MLE}} (x^N) = {\rm arg} \min_{\theta} d_{{\rm KL}}(\hat{q}, \theta). \label{mle_proj}
	\end{align}
	If we can compute this negative bias and remove it, then we can actually obtain an unbiased estimator of the Kullback--Leibler discrepancy. 
	This is how typical information criteria are constructed, correcting the inherent bias in using MLE for estimating the Kullback--Leibler discrepancy.
	
	Let $\theta^* = {\rm arg} \min_{\theta} d_{{\rm KL}}(q,\theta)$.
	By putting $D_1 = d_{{\rm KL}} \{ \hat{q},\hat{\theta}_{{\rm MLE}}(x^N) \} - d_{{\rm KL}} \{ \hat{q},{\theta}^* \}$, $D_2 = d_{{\rm KL}} \{ \hat{q},{\theta}^* \} - d_{{\rm KL}} \{ {q},{\theta}^* \}$ and $D_3 = d_{{\rm KL}} \{ {q},{\theta}^* \} - d_{{\rm KL}} \{ q,\hat{\theta}_{{\rm MLE}} (x^N) \}$, we have
	\begin{align}
		d_{{\rm KL}} \{ \hat{q},\hat{\theta}_{{\rm MLE}}(x^N) \} - d_{{\rm KL}} \{ q,\hat{\theta}_{{\rm MLE}} (x^N) \} = D_1 + D_2 + D_3.  \label{dKLdecomp}
	\end{align}
	By definition, ${\rm E}_x (D_2) = 0$.
	Also, as $N \to \infty$,
	\begin{align}
		N D_1 \stackrel{d}{\longrightarrow} -\frac{1}{2} s_1^{\top} J(\theta^*) s_1, \quad N D_3 \stackrel{d}{\longrightarrow} -\frac{1}{2} s_3^{\top} J(\theta^*) s_3, \label{mle_bias}
	\end{align}
	where $s_1 \sim {\rm N} \left\{ 0,J(\theta^*)^{-1} I(\theta^*) J(\theta^*)^{-1} \right\}$, $s_3 \sim {\rm N} \left\{ 0,J(\theta^*)^{-1} I(\theta^*) J(\theta^*)^{-1} \right\}$, $k \times k$ matrices $I(\theta)$ and $J(\theta)$ are defined as
	\begin{align*}
		{I}_{ij}(\theta) &= {\rm E}_z \left\{ \frac{\partial}{\partial \theta_i} \log p(z \mid \theta) \frac{\partial}{\partial \theta_j} \log p(z \mid \theta) \right\}, \\ 
		{J}_{ij}(\theta) &= - {\rm E}_z \left\{ \frac{\partial^2}{\partial \theta_i \partial \theta_j} \log p(z \mid \theta) \right\},
	\end{align*}
	and $J(\theta^*)$ is assumed to be positive definite.
	Note that the expectation of the limit distribution of $N D_1$ and $N D_3$ is
	\begin{align}
		-\frac{1}{2} {\rm E} \{ s_1^{\top} J(\theta^*) s_1 \} = -\frac{1}{2} {\rm tr} \left\{ I(\theta^*) J(\theta^*)^{-1} \right\}. \label{trIJ0}
	\end{align}
	
	From \eqref{crude}, \eqref{dKLdecomp}, \eqref{mle_bias} and \eqref{trIJ0}, \cite{Takeuchi} proposed
	\begin{align}
		{\rm TIC} = -2 \sum_{t=1}^N \log p \{ x^{(t)} \mid \hat{\theta}_{{\rm MLE}}(x^N) \} + 2 {\rm tr} (\hat{I} \hat{J}^{-1}) \label{TIC}
	\end{align}
	as an approximately unbiased estimator of $2 N {\rm E}_x [d_{{\rm KL}} \{ q,\hat{\theta}_{{\rm MLE}} (x^N) \}]$, where $\hat{I}$ and $\hat{J}$ are consistent estimators of $I(\theta^*)$ and $J(\theta^*)$ given by
	\begin{align*}
		\hat{I}_{ij} = \frac{1}{N} \left. \sum_{t=1}^N \frac{\partial}{\partial \theta_i} \log p(x^{(t)} \mid \theta) \frac{\partial}{\partial \theta_j} \log p(x^{(t)} \mid \theta) \right|_{\theta=\hat{\theta}_{{\rm MLE}}(x^N)},
	\end{align*}
	\begin{align*}
		\hat{J}_{ij} = -\frac{1}{N} \left. \sum_{t=1}^N \frac{\partial^2}{\partial \theta_i \partial \theta_j} \log p(x^{(t)} \mid \theta) \right|_{\theta=\hat{\theta}_{{\rm MLE}}(x^N)}.
	\end{align*}
	The quantity \eqref{TIC} is called Takeuchi Information Criterion.
	
	If the model includes the true distribution: $q(x)=p(x \mid \theta^*)$ for some $\theta^*$, then $I(\theta^*)$ and $J(\theta^*)$ coincide and thus ${\rm tr} \{ I(\theta^*) J(\theta^*)^{-1} \} = k$.
	Recall that $k$ is the dimension of the parameter $\theta \in \mathbb{R}^k$.
	Based on this, \cite{Akaike} proposed
	\begin{align}
		{\rm AIC} = -2 \sum_{t=1}^N \log p \{ x^{(t)} \mid \hat{\theta}_{{\rm MLE}}(x^N) \} + 2 k \label{AIC}
	\end{align}
	as an approximately unbiased estimator of $2 N {\rm E}_x [d_{{\rm KL}} \{ q,\hat{\theta}_{{\rm MLE}} (x^N) \}]$.
	The quantity \eqref{AIC} is called Akaike Information Criterion.
	
	Thus, information criteria enable to compare the goodness of fit of statistical models.
	Among several candidate models, the model with minimum information criterion is considered to be the closest to the true data-generating process.
	In practice, since TIC requires more computation than AIC and, furthermore, TIC often suffers from instability caused by estimation errors in $\hat{I}$ and $\hat{J}$, AIC is recommended to use as long as the model is not badly mis-specified \citep[see][Section 2.3]{Burnham}.

	\section{Information criteria for NCE (NCIC)}\label{sec_NCIC}
	In this section, we develop new information criteria for NCE, which we call the Noise Contrastive Information Criterion (NCIC).

	\subsection{Setting and assumptions}
	Suppose we have $N$ i.i.d.~samples $x^{(1)},\dots,x^{(N)}$ from an unknown distribution $q(x)$ and estimate a non-normalized model \eqref{NCEparam} by using NCE with $M$ noise samples $y^{(1)},\dots,y^{(M)}$ from $n(y)$.
	The true distribution $q(x)$ may not be contained in the assumed non-normalized model.
	The parameter $\theta$ in the non-normalized model \eqref{NCEparam} is assumed to be identifiable and have a compact parameter space $\Theta \subset \mathbb{R}^{m-1}$.
	
	For convenience, we denote $\xi=(\theta,c)$, $m={\rm dim}(\xi)={\rm dim}(\theta)+1$, $\hat{\xi}=\hat{\xi}_{{\rm NCE}}$ and $\hat{p}(x)=p(x \mid \hat{\xi})$.
	Namely, $\hat{\xi}=\hat{\xi}_{{\rm NCE}}$ is the minimizer of $\hat{d}_{{\rm NCE}}$ in \eqref{Jdef}.
	The gradient and Hessian with respect to $\xi$ are written as $\nabla_{\xi}$ and $\nabla_{\xi}^2$, respectively.
	Also, we define $\xi^* = {\rm arg} \min_{\xi} d_{{\rm NCE}} (q,p_{\xi}) = (\theta^{*},c^*)$ and write $p_*(x)=p (x \mid \xi^*)$.
	Note that $p_*(x)=q(x)$ when the model includes the true distribution.
	We denote the expectation with respect to $x^{(1)},\dots,x^{(N)} \sim q(x)$ and $y^{(1)},\dots,y^{(M)} \sim n(y) $ by ${\rm E}_{x,y}$.
	The expectation and covariance matrix with respect to $z \sim p(z)$ are denoted by ${\rm E}_p$ and ${\rm Cov}_p$, respectively.
	
	Following \cite{Gutmann12} and \cite{Chopin}, we consider the asymptotics where $N \to \infty$, $M \to \infty$ and $M/N \to \nu$ with $0<\nu<\infty$.
	Let
	\begin{align}
		\rho_d (x,\xi) &= -\log \frac{N p(x \mid \xi)}{N p(x \mid \xi) + M n(x)}, \label{rhod} \\
		\rho_n (y,\xi) &= -\log \frac{M n(y)}{N p(y \mid \xi) + M n(y)}. \label{rhon}
	\end{align}
	Then, the objective function to be minimized in NCE is represented as
	\begin{align*}
		\hat{d}_{{\rm NCE}}(\xi) = \frac{1}{N} \sum_{t=1}^N \rho_d(x^{(t)},\xi) + \frac{1}{N} \sum_{t=1}^M \rho_n (y^{(t)},\xi).
	\end{align*}
	Define $m \times m$ matrices ${I}(\xi)$ and ${J}(\xi)$ by 
	\begin{align*}
		I(\xi) =& \frac{N}{N+M} {\rm Cov}_q \left\{ \nabla_{\xi} \rho_d (z,\xi) \right\} + \frac{M}{N+M} {\rm Cov}_n \left\{ \nabla_{\xi} \rho_n (z,\xi) \right\},
	\end{align*}
	\begin{align*}
		J(\xi) =& \frac{N}{N+M} {\rm E}_q \left\{ \nabla_{\xi}^2 \rho_d (z,\xi) \right\} + \frac{M}{N+M} {\rm E}_n \left\{ \nabla_{\xi}^2 \rho_n (z,\xi) \right\}. 
	\end{align*}
	We assume the following regularity conditions:
	
	\begin{itemize}
		\item[(N1)] {For every $\theta$, the support of $p(z \mid \theta)$ is included in that of $n(z)$.}
		
		\item[(N2)] {For every $z$, $\log p(z \mid \theta)$ is three times continuously differentiable over $\Theta$.}
		
		\item[(N3)] {Both ${\rm  E}_q \left\{ \nabla_{\xi} \rho_d(z,\xi^*) \nabla_{\xi} \rho_d(z,\xi^*)^{\top} \right\}$ and ${\rm  E}_n \left\{ \nabla_{\xi} \rho_n(z,\xi^*) \nabla_{\xi} \rho_n(z,\xi^*)^{\top} \right\}$ are finite.}
		
		\item[(N4)] {There exist functions $b_d(z)$ and $b_n(z)$ such that
			\begin{align*}
				\left| \rho_d(z,\xi) \right| \leq b_d(z), \quad \left| \frac{\partial^2}{\partial \xi_i \partial \xi_j} \rho_d(z,\xi) \right| \leq b_d(z), \quad \left| \frac{\partial^3}{\partial \xi_i \partial \xi_j \partial \xi_k} \rho_d(z,\xi) \right| \leq b_d(z), \\
				\left| \rho_n(z,\xi) \right| \leq b_n(z), \quad \left| \frac{\partial^2}{\partial \xi_i \partial \xi_j} \rho_n(z,\xi) \right| \leq b_n(z), \quad \left| \frac{\partial^3}{\partial \xi_i \partial \xi_j \partial \xi_k} \rho_n(z,\xi) \right| \leq b_n(z)
			\end{align*}
			for all $i,j,k$ and $\xi$, where ${\rm E}_q \{ b_d(z) \} < \infty$ and ${\rm E}_n \{ b_n(z) \} < \infty$.}
		
		\item[(N5)] {The matrix $J(\xi^*)$ is nonsingular.}
	\end{itemize}

	Assumption (N1) is standard in NCE \citep{Gutmann12}.
	Assumptions (N2)-(N5) are similar to the regularity conditions for AIC and TIC \citep{Konishi08}.
	Note that $d_{{\rm NCE}}(q, p_*)$ is finite from the assumption (N4).
	
	\subsection{Bias evaluation}\label{subsec:bias_NCE}
	Similarly to $d_{{\rm KL}} \{ \hat{q},\hat{\theta}_{{\rm MLE}}(x^N) \}$ in \eqref{crude}, the quantity $\hat{d}_{{\rm NCE}} (\hat{\xi})$ has negative bias as an estimator of ${\rm E}_{x,y} \{ d_{{\rm NCE}}(q,\hat{p}) \}$.
	Here, we evaluate this bias following a similar argument to AIC and TIC \citep{Burnham,Konishi08}.
	
	First, the asymptotic distribution of NCE is obtained as follows.
	
	\begin{Lemma}\label{lem_asymp} 
		Under (N1)-(N5),
		\begin{align}
			\sqrt{N} \left( \hat{\xi} - \xi^* \right) \stackrel{d}{\longrightarrow} {\rm N} \left\{ 0,J(\xi^*)^{-1} I(\xi^*) J(\xi^*)^{-1} \right\}. \label{nce_asymp}
		\end{align}
	\end{Lemma}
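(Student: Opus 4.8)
The plan is to treat NCE as an M-estimator and run the classical three-step argument — consistency, a Taylor expansion of the estimating equation, and a central limit theorem for the score — while respecting the two-sample structure in which the data $x^{(1)},\dots,x^{(N)}$ and the noise $y^{(1)},\dots,y^{(M)}$ form independent i.i.d.\ sequences with $M/N \to \nu$.

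First I would establish $\hat{\xi} \stackrel{p}{\longrightarrow} \xi^*$. Since ${\rm E}_{x,y}\{\hat{d}_{{\rm NCE}}(\xi)\} = d_{{\rm NCE}}(q,p_\xi)$, the envelope bounds $b_d,b_n$ in (N4) together with the integrability in (N3) yield a uniform law of large numbers, so $\hat{d}_{{\rm NCE}}(\xi)$ converges uniformly on $\Theta$ to $d_{{\rm NCE}}(q,p_\xi)$; because $D_{{\rm NCE}}$ is a Bregman divergence with a unique minimizer $\xi^*$ over the identifiable model and $J(\xi^*)$ is nonsingular by (N5), this minimum is well separated and consistency follows from the standard argmin argument.

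Next, since $\Theta$ is open and, by (N2), $\hat{d}_{{\rm NCE}}$ is $C^3$, the consistent estimator $\hat{\xi}$ is eventually interior and solves $\nabla_{\xi}\hat{d}_{{\rm NCE}}(\hat{\xi})=0$. Expanding around $\xi^*$ gives
\[
0 = \nabla_{\xi}\hat{d}_{{\rm NCE}}(\xi^*) + \nabla_{\xi}^2\hat{d}_{{\rm NCE}}(\bar{\xi})\,(\hat{\xi}-\xi^*)
\]
for some $\bar{\xi}$ between $\hat{\xi}$ and $\xi^*$, hence
\[
\sqrt{N}\,(\hat{\xi}-\xi^*) = -\{\nabla_{\xi}^2\hat{d}_{{\rm NCE}}(\bar{\xi})\}^{-1}\,\sqrt{N}\,\nabla_{\xi}\hat{d}_{{\rm NCE}}(\xi^*).
\]
I would then control the two factors. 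The third-derivative bound in (N4) lets me replace $\bar{\xi}$ by $\xi^*$ with a negligible remainder, and applying the law of large numbers separately to the data and noise sums gives $\nabla_{\xi}^2\hat{d}_{{\rm NCE}}(\bar{\xi})\stackrel{p}{\longrightarrow} J(\xi^*)$, invertible by (N5). For the score, I would split $\sqrt{N}\,\nabla_{\xi}\hat{d}_{{\rm NCE}}(\xi^*)$ into a data piece $N^{-1/2}\sum_{t=1}^N \nabla_{\xi}\rho_d(x^{(t)},\xi^*)$ and a noise piece $N^{-1/2}\sum_{t=1}^M \nabla_{\xi}\rho_n(y^{(t)},\xi^*)$; the first-order condition defining $\xi^*$, namely $\nabla_{\xi}d_{{\rm NCE}}(q,p_{\xi^*})=0$, forces the deterministic part of each piece to vanish, so both are centered. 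The i.i.d.\ CLT applied to the data piece and, after writing $N^{-1/2}\sum_{t=1}^M = (M/N)^{1/2}\,M^{-1/2}\sum_{t=1}^M$, to the noise piece, together with the independence of the two samples, shows that the score is asymptotically Gaussian with covariance assembling into $I(\xi^*)$. Slutsky's theorem combines the two factors into the sandwich form \eqref{nce_asymp}.

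The step I expect to be most delicate is the bookkeeping in this two-sample CLT: tracking the $N/(N+M)$ and $M/(N+M)$ weights and the $M/N\to\nu$ rate so that the centered data and noise score covariances assemble into exactly $I(\xi^*)$ while the Hessian assembles into $J(\xi^*)$, and confirming that the drift term is genuinely $o_p(1)$ — which requires the population first-order condition to hold at the limiting ratio, so that the mild dependence of the minimizer on $M/N$ does not perturb the leading term. Establishing the uniform law of large numbers from the envelopes in (N4) is the other place needing care, though it is routine given the stated integrability.
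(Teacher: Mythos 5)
Your overall strategy is sound, but it takes a genuinely different route from the paper: the paper's proof is essentially two sentences, observing that NCE is a weighted M-estimator under standard stratified sampling with two strata (data of size $N$, noise of size $M$) and then citing Theorems 3.1 and 3.2 of Wooldridge (2001) for consistency and asymptotic normality, respectively. Your proposal unpacks what those theorems do internally---uniform law of large numbers for consistency, Taylor expansion of the estimating equation, a two-sample CLT for the score, and Slutsky---which is more self-contained and makes visible exactly where (N1)--(N5) enter, at the cost of re-deriving textbook machinery. Note that your well-separatedness argument via (N5) is only local on the open set $\Theta$ (the paper implicitly delegates the global argmin step, and the triangular-array dependence of $\rho_d,\rho_n$ on $M/N$ that you rightly flag, to Wooldridge's hypotheses).

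Two bookkeeping points in your sketch need repair. First, your claim that the first-order condition ``forces the deterministic part of each piece to vanish, so both are centered'' is false: ${\rm E}_q\{\nabla_\xi \rho_d(z,\xi^*)\}$ and ${\rm E}_n\{\nabla_\xi \rho_n(z,\xi^*)\}$ are individually nonzero (even in the well-specified case); only the weighted combination ${\rm E}_q\{\nabla_\xi \rho_d(z,\xi^*)\}+(M/N)\,{\rm E}_n\{\nabla_\xi \rho_n(z,\xi^*)\}=\nabla_\xi d_{{\rm NCE}}(q,p_{\xi})\big|_{\xi=\xi^*}$ vanishes. You must center each stratum sum at its own mean and use the first-order condition to cancel the combined drift; this is precisely why $I(\xi)$ is defined with ${\rm Cov}$ rather than raw outer products of scores. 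Second, with the paper's $1/N$ normalization of $\hat{d}_{{\rm NCE}}$, the law of large numbers gives $\nabla_\xi^2 \hat{d}_{{\rm NCE}}(\bar{\xi}) \stackrel{p}{\longrightarrow} {\rm E}_q\{\nabla_\xi^2\rho_d\}+\nu\,{\rm E}_n\{\nabla_\xi^2\rho_n\}=(1+\nu)J(\xi^*)$, and the $\sqrt{N}$-scaled score has limiting variance ${\rm Cov}_q\{\nabla_\xi\rho_d\}+\nu\,{\rm Cov}_n\{\nabla_\xi\rho_n\}=(1+\nu)I(\xi^*)$---not ``exactly'' $J(\xi^*)$ and $I(\xi^*)$ as you assert. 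Assembling the sandwich from your own computation therefore gives asymptotic variance $(1+\nu)^{-1}J(\xi^*)^{-1}I(\xi^*)J(\xi^*)^{-1}$ for $\sqrt{N}(\hat{\xi}-\xi^*)$; the clean form in \eqref{nce_asymp} is, under Wooldridge's stratified normalization, the variance of $\sqrt{N+M}(\hat{\xi}-\xi^*)$. This constant factor is exactly the delicate weight-tracking you flagged, and it washes out downstream (in Lemma~\ref{lem_bias} the compensating factor in the empirical Hessian restores $N D_1 \stackrel{d}{\longrightarrow} -\frac{1}{2}s^{\top}J(\xi^*)s$ with $s \sim {\rm N}\{0, J(\xi^*)^{-1}I(\xi^*)J(\xi^*)^{-1}\}$, so the trace correction and NCIC are unaffected), but a careful write-up of your route must state the scaling consistently rather than declaring the pieces assemble into $I$ and $J$ exactly.
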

	\begin{proof}
		The current setting of NCE corresponds to stratified sampling with two strata: data (size $N$) and noise (size $M$).
		From Theorem 3.1 of \cite{Wooldridge}, $\hat{\xi}$ is consistent: $\hat{\xi} \stackrel{p}{\longrightarrow} \xi^*$.
		Then, from Theorem 3.2 of \cite{Wooldridge}, the asymptotic distribution of $\hat{\xi}$ is obtained as \eqref{nce_asymp}.
	\end{proof}
	
	Note that \eqref{nce_asymp} is valid even when the model is mis-specified.
	We also note that \cite{Chopin} established a rigorous asymptotic theory of NCE under general MCMC sampling of noise.
	
	Let $D_1 = \hat{d}_{{\rm NCE}}(\hat{\xi}) - \hat{d}_{{\rm NCE}}({\xi}^*)$, $D_2 = \hat{d}_{{\rm NCE}}({\xi}^*) - {d}_{{\rm NCE}}(q,p_{*})$ and $D_3 = {d}_{{\rm NCE}}(q,p_*) - {d}_{{\rm NCE}}(q,\hat{p})$.
	Then,
	\begin{align}
		\hat{d}_{{\rm NCE}}(\hat{\xi}) - d_{{\rm NCE}}(q,\hat{p}) = D_1+D_2+D_3. \label{NCEdecomp}
	\end{align}
	
	\begin{Lemma}\label{lem_bias}
		Under (N1)-(N5),
		\begin{itemize}
			\item[(a)] $N D_1 \stackrel{d}{\longrightarrow} -\frac{1}{2} s^{\top} J(\xi^*) s$, where $s \sim {\rm N} \left\{ 0,J(\xi^*)^{-1} I(\xi^*) J(\xi^*)^{-1} \right\}$.
			
			\item[(b)] ${\rm E}_{x,y} (D_2)=0$.
			
			\item[(c)] $N D_3 \stackrel{d}{\longrightarrow} -\frac{1}{2} s^{\top} J(\xi^*) s$, where $s \sim {\rm N} \left\{ 0,J(\xi^*)^{-1} I(\xi^*) J(\xi^*)^{-1} \right\}$.
		\end{itemize}
	\end{Lemma}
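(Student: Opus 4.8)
The plan is to reproduce, for the stratified data/noise sample, the three-term argument used for AIC and TIC in Section~\ref{sec_AIC}, treating $\hat{d}_{{\rm NCE}}$ as the objective of an extremum estimator. Parts (a) and (c) are second-order expansions in which a first-order optimality condition annihilates the linear term, leaving a quadratic form in $\sqrt{N}(\hat{\xi}-\xi^*)$ that is then evaluated by the delta method; part (b) is an elementary unbiasedness statement. Throughout I would write $H_N(\xi)=\nabla_\xi^2\hat{d}_{{\rm NCE}}(\xi)$ for the empirical Hessian and $\bar{H}(\xi)=\nabla_\xi^2 d_{{\rm NCE}}(q,p_\xi)$ for the population one, and I would use the consistency $\hat{\xi}\stackrel{p}{\longrightarrow}\xi^*$ together with the limit law $\sqrt{N}(\hat{\xi}-\xi^*)\stackrel{d}{\longrightarrow}s$ supplied by Lemma~\ref{lem_asymp}.

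For (a), since $\hat{\xi}$ minimizes $\hat{d}_{{\rm NCE}}$ we have $\nabla_\xi\hat{d}_{{\rm NCE}}(\hat{\xi})=0$. Expanding $\hat{d}_{{\rm NCE}}(\xi^*)$ about $\hat{\xi}$ to second order and using this stationarity gives
\[
D_1=\hat{d}_{{\rm NCE}}(\hat{\xi})-\hat{d}_{{\rm NCE}}(\xi^*)=-\tfrac{1}{2}(\hat{\xi}-\xi^*)^{\top}H_N(\tilde{\xi})(\hat{\xi}-\xi^*),
\]
where $\tilde{\xi}$ lies on the segment between $\hat{\xi}$ and $\xi^*$, so that $\tilde{\xi}\stackrel{p}{\longrightarrow}\xi^*$. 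I would then show $H_N(\tilde{\xi})\stackrel{p}{\longrightarrow}J(\xi^*)$ by applying the law of large numbers separately to the data average and the noise average and using the third-derivative envelopes in (N4), which make the Hessian converge uniformly on a neighborhood of $\xi^*$ and thus permit replacing $\tilde{\xi}$ by $\xi^*$. Writing $ND_1=-\tfrac{1}{2}\{\sqrt{N}(\hat{\xi}-\xi^*)\}^{\top}H_N(\tilde{\xi})\{\sqrt{N}(\hat{\xi}-\xi^*)\}$, the joint convergence $(\sqrt{N}(\hat{\xi}-\xi^*),H_N(\tilde{\xi}))\stackrel{d}{\longrightarrow}(s,J(\xi^*))$ (the second coordinate being a constant limit) and the continuous mapping theorem give $ND_1\stackrel{d}{\longrightarrow}-\tfrac{1}{2}s^{\top}J(\xi^*)s$.

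Part (c) is entirely analogous but at the population level. Since $\xi^*$ minimizes $d_{{\rm NCE}}(q,p_\xi)$ over the open set $\Theta$, its gradient vanishes, $\nabla_\xi d_{{\rm NCE}}(q,p_\xi)|_{\xi^*}=0$; expanding $d_{{\rm NCE}}(q,\hat{p})=d_{{\rm NCE}}(q,p_{\hat{\xi}})$ about $\xi^*$ then yields $D_3=-\tfrac{1}{2}(\hat{\xi}-\xi^*)^{\top}\bar{H}(\bar{\xi})(\hat{\xi}-\xi^*)$ for some $\bar{\xi}$ between $\hat{\xi}$ and $\xi^*$, and continuity of $\bar{H}$ with $\hat{\xi}\stackrel{p}{\longrightarrow}\xi^*$ gives $\bar{H}(\bar{\xi})\stackrel{p}{\longrightarrow}J(\xi^*)$, so the same delta-method step produces the identical limit $-\tfrac{1}{2}s^{\top}J(\xi^*)s$. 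Part (b) is immediate: $\xi^*$ is nonrandom, so taking ${\rm E}_{x,y}$ of $\hat{d}_{{\rm NCE}}(\xi^*)=\frac{1}{N}\sum_{t=1}^N\rho_d(x^{(t)},\xi^*)+\frac{1}{N}\sum_{t=1}^M\rho_n(y^{(t)},\xi^*)$ and using $x^{(t)}\sim q$, $y^{(t)}\sim n$ reproduces exactly $d_{{\rm NCE}}(q,p_*)={\rm E}_q\{\rho_d(z,\xi^*)\}+\frac{M}{N}{\rm E}_n\{\rho_n(z,\xi^*)\}$, whence ${\rm E}_{x,y}(D_2)=0$; the interchange of expectation and summation is licensed by the integrable envelopes $b_d,b_n$ of (N4).

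The main obstacle is the identification of the common limiting Hessian with $J(\xi^*)$ in both (a) and (c). This needs, first, the uniform control furnished by (N4) to justify evaluating the Hessian at $\xi^*$ rather than at the intermediate points $\tilde{\xi},\bar{\xi}$, and second, careful bookkeeping of the two-stratum weights: in the $1/N$-normalized objective the data average carries weight $1$ and the noise average weight $M/N$, and one must reconcile these with the $N/(N+M)$ and $M/(N+M)$ weights appearing in the definition of $J$ so that the stated form emerges. Concretely, what the limit law of the quadratic form depends on is the product of the Hessian limit with the covariance $J(\xi^*)^{-1}I(\xi^*)J(\xi^*)^{-1}$ of $s$ from Lemma~\ref{lem_asymp}, and one checks that this product is $I(\xi^*)J(\xi^*)^{-1}$; once this is verified, (a) and (c) share the common limit, and the trace ${\rm tr}\{I(\xi^*)J(\xi^*)^{-1}\}$ of this product will supply the bias correction of the resulting criterion, in exact parallel with TIC.
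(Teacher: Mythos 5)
Your proposal is correct and follows essentially the same route as the paper: the same second-order Taylor expansions of $\hat{d}_{{\rm NCE}}$ about $\hat{\xi}$ and of $d_{{\rm NCE}}(q,p_{\xi})$ about $\xi^*$ with the stationarity conditions killing the linear terms, the Hessian limit $J(\xi^*)$ combined with Lemma~\ref{lem_asymp} via Slutsky (what you call the delta-method step), and the exact expectation identity matching \eqref{Jdef} with \eqref{Jqp} for part (b). The only cosmetic difference is that the paper delegates the convergence $\nabla_{\xi}^2 \hat{d}_{{\rm NCE}}(\xi^{\dagger}) \stackrel{p}{\longrightarrow} J(\xi^*)$ to Section 3.3 of \cite{Wooldridge}, where you sketch it directly via a uniform law of large numbers using the envelopes in (N4); your closing remark that the limit of the quadratic form depends only on the normalization-invariant product of the Hessian limit with the sandwich covariance correctly disposes of the stratum-weight bookkeeping.
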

	\begin{proof}
		(a)
		Since $\nabla_{\xi} \hat{d}_{{\rm NCE}}(\xi)=0$ at $\xi=\hat{\xi}$, the Taylor expansion of $\hat{d}_{{\rm NCE}}(\xi)$ around $\xi=\hat{\xi}$ is given by
		\begin{align*}
			&\hat{d}_{{\rm NCE}}(\xi^*) = \hat{d}_{{\rm NCE}}(\hat{\xi}) + \frac{1}{2} (\xi^{*}-\hat{\xi})^{\top} \nabla_{\xi}^2 \hat{d}_{{\rm NCE}}(\xi^{\dagger}) (\xi^{*}-\hat{\xi}),
		\end{align*}
		where $\xi^{\dagger}$ is a vector on the segment from $\hat{\xi}$ to $\xi^*$.
		From $\hat{\xi} \stackrel{p}{\longrightarrow} \xi^*$ and the discussion in Section 3.3 of \cite{Wooldridge}, $\nabla_{\xi}^2 \hat{d}_{{\rm NCE}}(\xi^{\dagger}) \stackrel{p}{\longrightarrow} J(\xi^*)$.
		Therefore, from Lemma~\ref{lem_asymp} and Slutsky's theorem,
		\begin{align*}
			N D_1 = -\frac{N}{2} (\hat{\xi}-\xi^*)^{\top} \nabla_{\xi}^2 \hat{d}_{{\rm NCE}}(\xi^{\dagger}) (\hat{\xi}-\xi^*) \stackrel{d}{\longrightarrow} -\frac{1}{2} s^{\top} J(\xi^{*}) s, 
		\end{align*}
		where $s \sim {\rm N} \left\{ 0,J(\xi^*)^{-1} I(\xi^*) J(\xi^*)^{-1} \right\}$.
		
		(b)
		From \eqref{Jdef}, \eqref{Jqp} and the law of large numbers, ${\rm E}_{x,y} (D_2)=0$. 
		
		(c)
		Since $\nabla_{\xi} {d}_{{\rm NCE}}(q,p_{\xi})=0$ at $\xi=\xi^*$ and $\nabla_{\xi}^2 {d}_{{\rm NCE}}(q,p_{\xi})=J(\xi)$,  the Taylor expansion of ${d}_{{\rm NCE}}(q,p_{\xi})$ around $\xi=\xi^*$ is given by
		\begin{align*}
			&{d}_{{\rm NCE}}(q,\hat{p}) = {d}_{{\rm NCE}}(q,p_*) + \frac{1}{2} (\hat{\xi}-\xi^{*})^{\top} J (\xi^{\dagger}) (\hat{\xi}-\xi^{*}),
		\end{align*}
		where $\xi^{\dagger}$ is a vector on the segment from $\hat{\xi}$ to $\xi^*$.
		From $\hat{\xi} \stackrel{p}{\longrightarrow} \xi^*$ and the continuous mapping theorem, we have $J(\xi^{\dagger}) = J(\xi^*)+o_p(1)$.
		Therefore, from Lemma~\ref{lem_asymp} and Slutsky's theorem,
		\begin{align*}
			N D_3 = -\frac{N}{2} (\hat{\xi}-\xi^*)^{\top} J(\xi^{\dagger}) (\hat{\xi}-\xi^*) \stackrel{d}{\longrightarrow} -\frac{1}{2} s^{\top} J(\xi^{*}) s, 
		\end{align*}
		where $s \sim {\rm N} \left\{ 0,J(\xi^*)^{-1} I(\xi^*) J(\xi^*)^{-1} \right\}$.
	\end{proof}
	
	From Lemma~\ref{lem_bias}, the expectation of the limit distribution of $N D_1$ and $N D_3$ is
	\begin{align}
		-\frac{1}{2} {\rm E} \{ s^{\top} J(\xi^*) s \} = -\frac{1}{2} {\rm tr} \left\{ I(\xi^*) J(\xi^*)^{-1} \right\}. \label{trIJ}
	\end{align}
	
	When the model includes the true distribution (well-specified case), \eqref{trIJ} has a simpler form.
	Let
	\begin{align*}
		b(z) = \frac{p_*(z) n(z)}{r(z)^2}, 
	\end{align*}
	where
	\begin{align}
		r(z)=\frac{N}{N+M} {p}_*(z)+ \frac{M}{N+M} n(z). \label{rdef}
	\end{align}
	is a mixture distribution of ${p}_*$ and $n$.
	
	\begin{Lemma}\label{lem_bias2}
		Assume that the model includes the true distribution: $q(x)=p(x \mid \xi^*)$. Then,
		\begin{align}
			{\rm tr} \left\{ I(\xi^*) J(\xi^*)^{-1} \right\} &= m- {\rm E}_r \left\{ b(z) \right\}, \label{lem2_eq}
		\end{align}
		where ${\rm E}_r$ denotes the expectation with respect to $z \sim r(z)$ in \eqref{rdef}.
	\end{Lemma}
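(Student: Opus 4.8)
The plan is to evaluate $I(\xi^*)$ and $J(\xi^*)$ explicitly in the well-specified case $q=p_*$ and then collapse the trace to a single scalar that isolates the special role of the normalization parameter $c$. I would start by introducing the classification probability
\[
h(z)=\frac{N p_*(z)}{N p_*(z)+M n(z)}
\]
and the score vector $u(z)=\nabla_\xi \log p(z\mid\xi^*)$. Differentiating \eqref{rhon} gives the compact forms $\nabla_\xi \rho_d=-(1-h)u$ and $\nabla_\xi \rho_n=hu$, and a second differentiation yields $\nabla_\xi^2 \rho_d=h(1-h)uu^\top-(1-h)H$ and $\nabla_\xi^2 \rho_n=h(1-h)uu^\top+hH$, where $H=\nabla_\xi^2\log p(z\mid\xi^*)$. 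Since $N p_*+M n=(N+M)r$ by \eqref{rdef}, one has $h=N p_*/\{(N+M)r\}$ and $1-h=M n/\{(N+M)r\}$, together with the key identity $h(1-h)=\kappa\, b(z)$, where $\kappa=NM/(N+M)^2$.

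Next I would compute $J(\xi^*)$. In the weighted sum defining $J$, the two contributions from $H$ carry coefficients $-\tfrac{N}{N+M}p_*(1-h)$ and $+\tfrac{M}{N+M}n h$; a direct substitution shows both equal $\kappa\, r b$, so they cancel and the $H$-terms drop out entirely. Using $\tfrac{N}{N+M}p_*+\tfrac{M}{N+M}n=r$, the remaining part gives
\[
J(\xi^*)={\rm E}_r\{h(1-h)uu^\top\}=\kappa\,{\rm E}_r\{b\,uu^\top\}.
\]
For $I(\xi^*)$ I would treat the uncentered second moments and the centering terms separately. The weighted sum of uncentered second moments, $\tfrac{N}{N+M}{\rm E}_{p_*}\{(1-h)^2 uu^\top\}+\tfrac{M}{N+M}{\rm E}_n\{h^2 uu^\top\}$, collapses through the same algebra $N p_*+M n=(N+M)r$ to precisely $J(\xi^*)$. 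The stratum means are ${\rm E}_{p_*}\{\nabla_\xi\rho_d\}=-\tfrac{M}{N+M}v$ and ${\rm E}_n\{\nabla_\xi\rho_n\}=+\tfrac{N}{N+M}v$ with $v={\rm E}_r\{b\,u\}$, so the centering correction evaluates to $\kappa\, vv^\top$. Hence $I(\xi^*)=J(\xi^*)-\kappa\, vv^\top$, and therefore ${\rm tr}\{I(\xi^*)J(\xi^*)^{-1}\}=m-\kappa\, v^\top J(\xi^*)^{-1}v$.

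The crux is the final scalar, and here I would invoke the one model-specific fact available: because $\log p(z\mid\theta,c)=\log\widetilde{p}(z\mid\theta)+c$, the derivative with respect to the last coordinate $c$ is identically $1$, so the last component $u_m(z)$ of $u(z)$ equals $1$ for every $z$. Writing $e_m$ for the last unit vector, this gives $J(\xi^*)e_m=\kappa\,{\rm E}_r\{b\,u\,(u^\top e_m)\}=\kappa\,{\rm E}_r\{b\,u\}=\kappa\, v$, whence $J(\xi^*)^{-1}v=\kappa^{-1}e_m$ and $\kappa\, v^\top J(\xi^*)^{-1}v=v^\top e_m={\rm E}_r\{b\,u_m\}={\rm E}_r\{b\}$. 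Substituting this into the trace expression yields \eqref{lem2_eq}. The main obstacle is not conceptual but arithmetic: the argument rests almost entirely on the identity $J(\xi^*)e_m=\kappa\, v$, which converts the quadratic form into the last coordinate of $v$, and on the two cancellations (the $H$-terms in $J$, and the matching of the uncentered second moments with $J$). I would verify those weight computations carefully, since a sign or normalization slip there is the easiest route to an incorrect constant.
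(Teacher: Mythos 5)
Your proposal is correct and is essentially the paper's own argument carried out in full detail: both compute $I(\xi^*)$ and $J(\xi^*)$ explicitly in the well-specified case, obtaining $J(\xi^*)=\kappa\,{\rm E}_r\{b\,uu^\top\}$ with $\kappa=NM/(N+M)^2$ and a rank-one deficit $I(\xi^*)=J(\xi^*)-\kappa\,vv^\top$, and both exploit that the score in the $c$-coordinate is identically $1$ to evaluate the trace correction. The only cosmetic difference is that the paper phrases the rank-one term via the $m$-th column $j_m(\xi^*)$ of $J(\xi^*)$ rather than your $v={\rm E}_r\{b\,u\}$; these coincide up to the factor $\kappa$ (indeed $j_m(\xi^*)=\kappa v$, which is exactly your identity $J(\xi^*)e_m=\kappa v$), so the two derivations are the same.
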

	\begin{proof}
		Let $s(z \mid \xi) = \nabla_{\xi} \log p(z \mid \xi)$ and $H(z \mid \xi) = \nabla_{\xi}^2 \log p(z \mid \xi)$.
		Since $\log p(z \mid \xi) = \log \widetilde{p}(z \mid \theta) + c$ where $\xi=(\theta,c)$, we have $s_m(z \mid \xi) = 1$ and $H_{im}(z \mid \xi)=H_{mi}(z \mid \xi) = 0$ for $i=1,\dots,m$.
		
		From the definition of $\rho_d$ and $\rho_n$ in \eqref{rhod} and \eqref{rhon},
		\begin{align*}
			\nabla_{\xi} \rho_d (z,\xi) = -\frac{M n(z)}{N p(z \mid \xi) + M n(z)} s(z \mid \xi), \quad 
			\nabla_{\xi} \rho_n (z,\xi) = \frac{N p(z \mid \xi)}{N p(z \mid \xi) + M n(z)} s(z \mid \xi).
		\end{align*}
		Thus,
		\begin{align*}
			\nabla_{\xi}^2 \rho_d (z,\xi) = \frac{N p(z \mid \xi) \cdot M n(z)}{(N p(z \mid \xi) + M n(z))^2} s(z \mid \xi) s(z \mid \xi)^{\top} -\frac{M n(z)}{N p(z \mid \xi) + M n(z)} H(z \mid \xi), \\
			\nabla_{\xi}^2 \rho_n (z,\xi) = \frac{N p(z \mid \xi) \cdot M n(z)}{(N p(z \mid \xi) + M n(z))^2} s(z \mid \xi) s(z \mid \xi)^{\top} +\frac{N p(z \mid \xi)}{N p(z \mid \xi) + M n(z)} H(z \mid \xi).
		\end{align*}
		Therefore,
		\begin{align*}
			J(\xi^*) &= \frac{N}{N+M} \int p(z \mid \xi^*) \nabla_{\xi}^2 \rho_d (z,\xi^*) {\rm d} z + \frac{M}{N+M} \int n(z) \nabla_{\xi}^2 \rho_n (z,\xi^*) {\rm d} z \\
			&= \frac{1}{N+M} \int \frac{N p(z \mid \xi^*) \cdot M n(z)}{N p(z \mid \xi^*) + M n(z)} s(z \mid \xi^*) s(z \mid \xi^*)^{\top} {\rm d} z.
\end{align*}
		Since $s_m(z \mid \xi) = 1$, the $m$-th column vector of $J(\xi^*)$ is
		\begin{align*}
			j_m(\xi^*) = \frac{1}{N+M} \int \frac{N p(z \mid \xi^*) \cdot M n(z)}{N p(z \mid \xi^*) + M n(z)} s(z \mid \xi^*) {\rm d} z.
		\end{align*}
		Then, 
		\begin{align*}
			I(\xi^*) &= \frac{N}{N+M} {\rm Cov}_q \left\{ \nabla_{\xi} \rho_d (z,\xi^*) \right\} + \frac{M}{N+M} {\rm Cov}_n \left\{ \nabla_{\xi} \rho_n (z,\xi^*) \right\} \\
			&= J(\xi^*) - \frac{(N+M)^2}{NM} j_m(\xi^*) j_m(\xi^*)^{\top},
		\end{align*}
		where we used
		\begin{align*}
			&{\rm Cov}_q \left\{ \nabla_{\xi} \rho_d (z,\xi) \right\} \\
			=& {\rm E}_q \left\{ \nabla_{\xi} \rho_d (z,\xi) \nabla_{\xi} \rho_d (z,\xi)^{\top} \right\} - {\rm E}_q \left\{ \nabla_{\xi} \rho_d (z,\xi) \right\} {\rm E}_q \left\{ \nabla_{\xi} \rho_d (z,\xi) \right\}^{\top} \\
			=& \int p(z \mid \xi) \left( \frac{M n(z)}{N p(z \mid \xi) + M n(z)} \right)^2 s(z \mid \xi) s(z \mid \xi)^{\top} {\rm d} z \\
			& \quad - \left( \int p(z \mid \xi) \frac{M n(z)}{N p(z \mid \xi) + M n(z)} s(z \mid \xi) {\rm d} z \right) \left( \int p(z \mid \xi) \frac{M n(z)}{N p(z \mid \xi) + M n(z)} s(z \mid \xi) {\rm d} z \right)^{\top},
		\end{align*}
		\begin{align*}
			&{\rm Cov}_n \left\{ \nabla_{\xi} \rho_n (z,\xi) \right\} \\
			=& {\rm E}_n \left\{ \nabla_{\xi} \rho_n (z,\xi) \nabla_{\xi} \rho_n (z,\xi)^{\top} \right\} - {\rm E}_n \left\{ \nabla_{\xi} \rho_n (z,\xi) \right\} {\rm E}_q \left\{ \nabla_{\xi} \rho_n (z,\xi) \right\}^{\top} \\
			=& \int n(z) \left( \frac{N p(z \mid \xi)}{N p(z \mid \xi) + M n(z)} \right)^2 s(z \mid \xi) s(z \mid \xi)^{\top} {\rm d} z \\
			& \quad - \left( \int n(z) \frac{N p(z \mid \xi)}{N p(z \mid \xi) + M n(z)} s(z \mid \xi) {\rm d} z \right) \left( \int n(z) \frac{N p(z \mid \xi)}{N p(z \mid \xi) + M n(z)} s(z \mid \xi) {\rm d} z \right)^{\top}.
		\end{align*}
		Thus,
		\begin{align*}
			{\rm tr} \left\{ I(\xi^*) J(\xi^*)^{-1} \right\} &= m-\frac{(N+M)^2}{NM} j_m(\xi^*)^{\top} J(\xi^*)^{-1} j_m(\xi^*) \nonumber \\
			&= m- {\rm E}_r \left\{ b(z) \right\}. 
		\end{align*}
	\end{proof}
	
	\cite{Gutmann12} pointed out that NCE converges to the maximum likelihood estimator as $M/(N+M) \to 1$ and \cite{Chopin} gave its proof.
	In this setting, $r(z)$ converges to $n(z)$ and thus ${\rm E}_r \left\{ b(z) \right\}$ goes to one.
	As a result, \eqref{lem2_eq} goes to $m-1$, which is equal to the dimension of the parameter $\theta$.
	
	\cite{Pan} and \cite{Mattheou} proposed information criteria with the quasi-likelihood and density power divergence, respectively, based on similar bias calculations.
	In comparison, the bias term here takes a more complicated form because we estimate not only the parameter but also the normalization constant in NCE.
	
	\subsection{Noise Contrastive Information Criterion (NCIC)}
	Now, we develop NCIC by using the bias evaluation in the previous subsection.
	
	Let
	\begin{align*}
		\overline{\nabla_{\xi} \rho_d} = \frac{1}{N} \sum_{t=1}^N \nabla_{\xi} \rho_d(x^{(t)},\hat{\xi}), \\ \overline{\nabla_{\xi} \rho_n} = \frac{1}{M} \sum_{t=1}^M \nabla_{\xi} \rho_n(y^{(t)},\hat{\xi}),
	\end{align*}
	and define $m \times m$ matrices $\hat{I}$ and $\hat{J}$ by
	\begin{align*}
		\hat{I} = \frac{1}{N+M} & \left[ \sum_{t=1}^N \left\{ \nabla_{\xi} \rho_d (x^{(t)},\hat{\xi})-\overline{\nabla_{\xi} \rho_d} \right\} \left\{ \nabla_{\xi} \rho_d (x^{(t)},\hat{\xi})-\overline{\nabla_{\xi} \rho_d} \right\}^{\top} \right. \\
		&\left. + \sum_{t=1}^M \left\{ \nabla_{\xi} \rho_n (y^{(t)},\hat{\xi})-\overline{\nabla_{\xi} \rho_n} \right\} \left\{ \nabla_{\xi} \rho_n (y^{(t)},\hat{\xi})-\overline{\nabla_{\xi} \rho_n} \right\}^{\top} \right],
	\end{align*}
	\begin{align*}
		\hat{J} = \frac{1}{N+M} \left\{ \sum_{t=1}^N \nabla_{\xi}^2 \rho_d (x^{(t)},\hat{\xi}) + \sum_{t=1}^M \nabla_{\xi}^2 \rho_n (y^{(t)},\hat{\xi}) \right\}. 
	\end{align*}
	From the discussion in Section 3.3 of \cite{Wooldridge}, $\hat{I}$ and $\hat{J}$ are consistent estimators of $I(\xi^*)$ and $J(\xi^*)$, respectively.
	Thus, from \eqref{NCEdecomp} and Lemma \ref{lem_bias}, we propose the quantity
	\begin{align}
		{\rm NCIC}_1 = N \hat{d}_{{\rm NCE}} (\hat{\xi}_{{\rm NCE}}) + {\rm tr} (\hat{I} \hat{J}^{-1}) \label{NCIC0}
	\end{align}
	as an approximately unbiased estimator of $N {\rm E}_{x,y} \left\{ d_{{\rm NCE}} (q,\hat{p}) \right\}$.
	
	We also propose a simpler version of NCIC by assuming that the model includes the true distribution.
	Let
	\begin{align}
		\hat{b}(z) = \frac{\hat{p}(z) n(z)}{\hat{r}(z)^2}, \label{b_def}
	\end{align}
	where
	\begin{align}
		\hat{r}(z)=\frac{N}{N+M} \hat{p}(z)+ \frac{M}{N+M} n(z). \end{align}
	Then, from Lemma \ref{lem_bias2}, we propose the quantity
	\begin{align}
		{\rm NCIC}_2 = N \hat{d}_{{\rm NCE}} (\hat{\xi}_{{\rm NCE}}) + m -\frac{1}{N+M} \left\{ \sum_{t=1}^N \hat{b}(x^{(t)}) + \sum_{t=1}^M \hat{b}(y^{(t)}) \right\} \label{NCIC}
	\end{align}
	as an approximately unbiased estimator of $N {\rm E}_{x,y} \{ d_{{\rm NCE}} (q,\hat{p}) \}$.
	
	By minimizing NCIC, we can select from non-normalized models \eqref{NCEparam} estimated by NCE.
	${\rm NCIC}_1$ \eqref{NCIC0} and ${\rm NCIC}_2$ \eqref{NCIC} are viewed as analogues of TIC \eqref{TIC} and AIC \eqref{AIC} for non-normalized models, respectively. 
	As will be shown in Section~\ref{subsec:bias_simulation}, ${\rm NCIC}_2$ has much smaller variance than ${\rm NCIC}_1$.
	Also, ${\rm NCIC}_2$ is computationally more efficient than ${\rm NCIC}_1$.
	Therefore, ${\rm NCIC}_2$ is recommended to use when the model is considered to be not badly mis-specified.
	This situation is quite similar to that of TIC and AIC \citep[see][Section 2.3]{Burnham}.
	
	Since NCE is an M-estimator, we can also develop Generalized Information Criterion \citep[GIC;][]{Konishi96} for NCE in principle.
	However, GIC involves the log-likelihood of the model and thus requires to compute the intractable normalization constant.
	On the other hand, NCIC can be readily calculated from the result of NCE.
	
	Instead of NCIC, we can also use leave-one-out cross-validation (LOOCV) with NCE for model selection.
	Specifically\footnote{Here, we assume $M=N$ for convenience.}, for $t=1,\dots,N$, let $\hat{\xi}^{(-t)}$ be the estimate of $\xi$ by NCE applied to $x^{(1)},\dots,x^{(t-1)},x^{(t+1)},\dots,x^{(N)}$ and $y^{(1)},\dots,y^{(t-1)},y^{(t+1)},\dots,y^{(N)}$.
	Then, the quantity
	\begin{align}
		\text{NCE-CV} = \sum_{t=1}^N \rho_d(x^{(t)},\hat{\xi}^{(-t)}) + \sum_{t=1}^M \rho_n (y^{(t)},\hat{\xi}^{(-t)}). \label{NCECV}
	\end{align}
	can be adopted as an approximately unbiased estimator of $N {\rm E}_{x,y} \{ d_{{\rm NCE}} (q,\hat{p}) \}$.
	We confirmed by simulation that the model selection performances of NCIC and NCE-CV are comparable, whereas NCIC is computationally more efficient than NCE-CV (Section~\ref{subsec:tGGM}).
	
	In developing NCIC, we assumed that the noise samples are independent.
	Recently, \cite{Chopin} established the asymptotic theory of NCE including cases where the noise samples are generated by MCMC.
	It is an interesting future work to extend NCIC to such general cases.
	Further problems for future research  include extension to generalized NCE \citep{Uehara20a} and missing data \citep{Uehara20b}.
	
	\section{Information criteria for score matching (SMIC)}
	In this section, we develop new information criteria for score matching, which we call the Score Matching Information Criterion (SMIC).
	For convenience, we focus on the original score matching estimator $\hat{\theta}_{{\rm SM}}$ in the following.
	Analogous results for the score matching estimator $\hat{\theta}_{{\rm SM+}}$ for non-negative data are obtained by replacing $\hat{d}_{{\rm SM}}$ and $\rho_{{\rm SM}}$ with $\hat{d}_{{\rm SM+}}$ and $\rho_{{\rm SM+}}$, respectively.
	
	Suppose we have $N$ i.i.d.~samples $x^{(1)},\dots,x^{(N)}$ from an unknown distribution $q(x)$ and fit a non-normalized model \eqref{nnp} with $\theta \in \mathbb{R}^k$ by score matching.
	Here, the true distribution $q(x)$ may not be contained in the assumed non-normalized model.
	We define $\theta^* = {\rm arg} \min_{\theta} d_{{\rm SM}} (q,p_{\theta})$ and write $p_*(x)=p (x \mid \theta^*)$ and $\hat{p}(x)=p(x \mid \hat{\theta}_{{\rm SM}})$.
	Note that $p_*(x)=q(x)$ when the model includes the true distribution.
	
	Define $k \times k$ matrices ${I}(\theta)$ and ${J}(\theta)$ by
	\begin{align*}
		I(\theta) = {\rm Cov}_q \left\{ \nabla_{\theta} \rho_{{\rm SM}} (x,\theta) \right\}, \quad J(\theta) = {\rm E}_q \left\{ \nabla_{\theta}^2 \rho_{{\rm SM}} (x,\theta) \right\}.
	\end{align*}
	Assume the following regularity conditions:
	
	\begin{itemize}
		\item[(S1)] For every $x$, $\log p(x \mid \theta)$ is $C^3$ with respect to $\theta$.
		
		\item[(S2)] ${\rm  E}_q \left[ \nabla_{\theta} \rho_{\mathrm{SM}}(x,\theta^*) \nabla_{\theta} \rho_{\mathrm{SM}}(x,\theta^*)^{\top} \right]$ is finite.
		
		\item[(S3)] There exists a function $b(x)$ such that
		\begin{align*}
			\left| \rho_{\mathrm{SM}}(x,\theta) \right| \leq b(x), \quad \left| \frac{\partial^2}{\partial \theta_i \partial \theta_j} \rho_{\mathrm{SM}}(x,\theta) \right| \leq b(x), \quad \left| \frac{\partial^3}{\partial \theta_i \partial \theta_j \partial \theta_k} \rho_{\mathrm{SM}}(x,\theta) \right| \leq b(x),
		\end{align*}
		for all $i,j,k$ and $\theta$, where ${\rm E}_q [b(x)] < \infty$.
		
		\item[(S4)] The matrix $J(\theta^*)$ is nonsingular.	
	\end{itemize}
	
	The quantity $\hat{d}_{{\rm SM}} (\hat{\theta}_{{\rm SM}})$ has negative bias as an estimator of ${\rm E}_x [d_{{\rm SM}}(q,\hat{p})]$ and we evaluate this bias following Section~\ref{subsec:bias_NCE}.
	Let $D_1 = \hat{d}_{{\rm SM}}(\hat{\theta}_{{\rm SM}}) - \hat{d}_{{\rm SM}}({\theta}^*)$, $D_2 = \hat{d}_{{\rm SM}}({\theta}^*) - {d}_{{\rm SM}}(q,p_{*})$ and $D_3 = {d}_{{\rm SM}}(q,p_*) - {d}_{{\rm SM}}(q,\hat{p})$.
	Then,
	\begin{align*}
		\hat{d}_{{\rm SM}}(\hat{\theta}_{{\rm SM}}) - d_{{\rm SM}}(q,\hat{p}) = D_1+D_2+D_3.
	\end{align*}
	By using a similar argument to Lemma~\ref{lem_bias}, we obtain the following.
	
	\begin{Lemma}\label{lem_bias_SM}
		Under (S1)-(S4),
		\begin{itemize}
			\item[(a)] $N D_1 \stackrel{d}{\longrightarrow} -\frac{1}{2} s^{\top} J(\xi^*) s$, where $s \sim {\rm N} \left\{ 0,J(\xi^*)^{-1} I(\xi^*) J(\xi^*)^{-1} \right\}$.
			
			\item[(b)] ${\rm E}_{q} (D_2)=0$.
			
			\item[(c)] $N D_3 \stackrel{d}{\longrightarrow} -\frac{1}{2} s^{\top} J(\xi^*) s$, where $s \sim {\rm N} \left\{ 0,J(\xi^*)^{-1} I(\xi^*) J(\xi^*)^{-1} \right\}$.
		\end{itemize}
	\end{Lemma}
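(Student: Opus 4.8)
The plan is to follow the three-part argument of Lemma~\ref{lem_bias} essentially verbatim, replacing the NCE objective by the score matching objective and the pair $\xi=(\theta,c)$ by $\theta$ alone. The one ingredient that must be supplied first is the score matching analogue of Lemma~\ref{lem_asymp}: since $\hat\theta_{\rm SM}$ minimizes the sample average $\hat d_{\rm SM}(\theta)=\frac1N\sum_{t}\rho_{\rm SM}(x^{(t)},\theta)$, it is an M-estimator, and the regularity conditions (S1)--(S4) are exactly what is needed to invoke standard M-estimation theory \citep{SM} to conclude consistency $\hat\theta_{\rm SM}\stackrel{p}{\to}\theta^*$ together with
\begin{align*}
\sqrt{N}\left(\hat\theta_{\rm SM}-\theta^*\right)\stackrel{d}{\longrightarrow}{\rm N}\left\{0,J(\theta^*)^{-1}I(\theta^*)J(\theta^*)^{-1}\right\}.
\end{align*}
Here (S1) gives the required smoothness, (S3) supplies the integrable envelope $b(x)$ that validates a uniform law of large numbers for $\hat d_{\rm SM}$ and its first two derivatives, (S2) guarantees the finite covariance driving the central limit theorem for the score $\nabla_\theta\hat d_{\rm SM}(\theta^*)$, and (S4) makes the limiting Hessian invertible.

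For part (a), I would expand $\hat d_{\rm SM}(\theta^*)$ to second order about $\hat\theta_{\rm SM}$. Because $\nabla_\theta\hat d_{\rm SM}(\hat\theta_{\rm SM})=0$ by the first-order condition, the linear term drops and
\begin{align*}
ND_1=-\frac{N}{2}(\hat\theta_{\rm SM}-\theta^*)^\top\nabla_\theta^2\hat d_{\rm SM}(\theta^\dagger)(\hat\theta_{\rm SM}-\theta^*),
\end{align*}
with $\theta^\dagger$ on the segment between $\hat\theta_{\rm SM}$ and $\theta^*$. Consistency of $\hat\theta_{\rm SM}$ together with the uniform law of large numbers (justified by the envelope in (S3)) and the continuous mapping theorem give $\nabla_\theta^2\hat d_{\rm SM}(\theta^\dagger)\stackrel{p}{\to}J(\theta^*)$; combining this with the asymptotic normality above and Slutsky's theorem yields the stated quadratic-form limit.

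Part (b) is in fact exact rather than merely asymptotic: for any fixed $\theta$ the average $\hat d_{\rm SM}(\theta)$ is, by its construction via integration by parts, an unbiased estimator of $d_{\rm SM}(q,p_\theta)$, so evaluating at the deterministic $\theta^*$ gives ${\rm E}_q\{\hat d_{\rm SM}(\theta^*)\}=d_{\rm SM}(q,p_*)$ and hence ${\rm E}_q(D_2)=0$. For part (c), I would Taylor expand the population discrepancy $d_{\rm SM}(q,p_\theta)$ about $\theta^*$; since $\theta^*$ minimizes it the gradient vanishes and the Hessian equals $J(\theta)$, so $ND_3=-\frac{N}{2}(\hat\theta_{\rm SM}-\theta^*)^\top J(\theta^\dagger)(\hat\theta_{\rm SM}-\theta^*)$ with $J(\theta^\dagger)\stackrel{p}{\to}J(\theta^*)$, and the same Slutsky argument closes the case.

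The main obstacle is not any of these three steps individually---each is a mechanical transcription of Lemma~\ref{lem_bias}---but the supporting M-estimation result quoted at the outset: one must verify that the derivatives of $d_{\rm SM}(q,p_\theta)$ may be taken under the integral sign and that the sample Hessian converges uniformly in a neighborhood of $\theta^*$. Both rely on the third-derivative envelope in (S3), so the care in the proof lies in checking these interchange-of-limit conditions rather than in the Taylor expansions themselves. A minor notational remark: the statement is written with $\xi^*$ carried over from the NCE lemma, but in the score matching setting $\xi=\theta$, so every $\xi^*$ should be read as $\theta^*$ and $I,J$ denote the $k\times k$ matrices defined above.
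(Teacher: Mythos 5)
Your proposal is correct and follows essentially the same route as the paper, which proves this lemma simply by invoking ``a similar argument to Lemma~\ref{lem_bias}'': you transcribe that three-part Taylor-expansion argument with $\xi$ replaced by $\theta$, supply the needed M-estimation analogue of Lemma~\ref{lem_asymp} (consistency and asymptotic normality of $\hat\theta_{\rm SM}$, as in \cite{SM}), and correctly observe that part (b) holds exactly since $\hat d_{\rm SM}(\theta^*)$ is an unbiased sample average. Your closing remark that the $\xi^*$ appearing in the statement is a typographical carry-over from the NCE lemma and should read $\theta^*$ (with the $k\times k$ matrices $I(\theta^*)$, $J(\theta^*)$) is also right.
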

	
	The expectation of the limit distribution of $N D_1$ and $N D_3$ is
	\begin{align*}
		-\frac{1}{2} {\rm E} [s^{\top} J(\theta^*) s] = -\frac{1}{2} {\rm tr} \left\{ I(\theta^*) J(\theta^*)^{-1} \right\}.
	\end{align*}
	
	Let
	\begin{align*}
		\hat{I} &= \left. \frac{1}{N} \sum_{t=1}^N \nabla_{\theta} \rho_{{\rm SM}} (x^{(t)},\theta) \nabla_{\theta} \rho_{{\rm SM}} (x^{(t)},\theta)^{\top} \right|_{\theta=\hat{\theta}}, \quad \hat{J} = \left. \frac{1}{N} \sum_{t=1}^N \nabla_{\theta}^2 \rho_{{\rm SM}} (x^{(t)},\theta) \right|_{\theta=\hat{\theta}}. 
	\end{align*}
	Then, $\hat{I}$ and $\hat{J}$ are consistent estimators of $I(\theta^*)$ and $J(\theta^*)$, respectively.
	Thus, from Lemma~\ref{lem_bias_SM}, we propose the quantity
	\begin{align}
		{\rm SMIC} = N \hat{d}_{{\rm SM}} (\hat{\theta}_{{\rm SM}}) + {\rm tr} (\hat{I} \hat{J}^{-1}) \label{SMIC}
	\end{align}
	as an approximately unbiased estimator of $N {\rm E}_q \{ d_{{\rm SM}} (q,\hat{p}) \}$.

	For exponential families, the function $\rho_{{\rm SM}}(x,\theta)$ is given by the quadratic form \eqref{exp_SM} and so $\hat{I}$ and $\hat{J}$ in \eqref{SMIC} become simple:
	\begin{align*}
		\hat{I} = \frac{1}{N} \sum_{t=1}^N \left\{ \Gamma(x^{(t)}) \hat{\theta} + g(x^{(t)}) \right\} \left\{ \Gamma(x^{(t)}) \hat{\theta} + g(x^{(t)}) \right\}^{\top}, \quad \hat{J} = \frac{1}{N} \sum_{t=1}^N \Gamma(x^{(t)}). 
	\end{align*}
	On the other hand, for general models, the term $\hat{J}$ in SMIC involves fourth-order partial derivatives.
	Thus, the analytical complexity of SMIC can be larger than NCIC in general.
	Also note that SMIC can be used only for continuous models whereas NCIC is applicable to both continuous and discrete models.
	
	Unlike Lemma~\ref{lem_bias2} for NCIC, it seems difficult to simplify SMIC in the well-specified case due to the derivative with respect to $x$ in the objective functions of score matching.
	Also, note that our focus here is different from \cite{Dawid} and \cite{Shao}, who applied the idea of score matching to Bayesian model selection with improper priors.
	Finally, whereas we can also develop Generalized Information Criterion \citep[GIC;][]{Konishi96} for score matching in principle, GIC is based on the log-likelihood of the model and thus requires to compute the intractable normalization constant, which is not necessary in SMIC.
	
	Instead of SMIC, we can, again, use leave-one-out cross-validation (LOOCV) with score matching for model selection.
	Specifically, for $t=1,\dots,N$, let $\hat{\theta}^{(-t)}$ be the estimate of $\theta$ by score matching applied to $x^{(1)},\dots,x^{(t-1)},x^{(t+1)},\dots,x^{(N)}$.
	Then, the quantity
	\begin{align}
		\text{SM-CV} = \sum_{t=1}^N \rho_{{\rm SM}}(x^{(t)},\hat{\theta}^{(-t)}). \label{SMCV}
	\end{align}
	can be adopted as an approximately unbiased estimator of $N {\rm E}_{q} \{ d_{{\rm SM}} (q,\hat{p}) \}$.
	We confirmed by simulation that the model selection performances of SMIC and SM-CV are comparable, whereas SMIC is computationally more efficient than SM-CV (Section~\ref{subsec:tGGM}) similarly to NCIC.
	
	Recently, \cite{Liu} developed an estimation method for non-normalized models called the Discriminative Likelihood Estimation (DLE), which approximates the Kullback--Leiber divergence by using the techniques of density ratio estimation and Stein operators.
	They also proposed an information criterion based on DLE.
	It is an interesting future work to investigate the relationship of their method with NCIC and SMIC.
	
	\section{Simulation results}
	In this section, we confirm the validity of the proposed information criteria (${\rm NCIC}_1$, ${\rm NCIC}_2$, and SMIC) by simulation.
	For numerical optimization in NCE and score matching, we use the nonlinear conjugate gradient method \citep{Rasmussen}.
	
	\subsection{Accuracy of bias correction}\label{subsec:bias_simulation}
	
	First, we check the accuracy of the bias correction terms in ${\rm NCIC}$ and ${\rm SMIC}$.
	
	\subsubsection{NCIC}
	We generated $N=10^3$ independent samples from the two-component Gaussian mixture distribution $(1-\varepsilon) \cdot {\rm N} (0,1) + \varepsilon \cdot {\rm N} (0,10)$, where $\varepsilon$ specifies the proportion of outliers.
	Then, we applied NCE to estimate the parameters of the non-normalized model
	\begin{align}
		p(x \mid \theta) \propto \exp (\theta_{1} x^2 + \theta_{2} x), \label{ngaussian}
\end{align}
	which is a non-normalized version of the Gaussian distribution ($m=3$).
	The $M=10^3$ noise samples were generated from ${\rm N} (0,1)$ independently.
	When $\varepsilon=0$, the true distribution is included in the model \eqref{ngaussian}.
	This experimental setting follows \cite{Konishi96}.
	
	In Section 5, ${\rm NCIC}_1$ and ${\rm NCIC}_2$ were developed by correcting the bias of the quantity $N \hat{d}_{{\rm NCE}} (\hat{\xi}_{{\rm NCE}})$ as an estimator of $N {\rm E}_{x,y} [d_{{\rm NCE}}(q,\hat{p})]$.
	Namely, the true bias is
	\begin{align*}
		B= N {\rm E}_{x,y} \left\{ \hat{d}_{{\rm NCE}} (\hat{\xi}_{{\rm NCE}}) \right\} - N {\rm E}_{x,y} \left\{ d_{{\rm NCE}}(q,\hat{p}) \right\},
	\end{align*}
	and ${\rm NCIC}_1$ in \eqref{NCIC0} and ${\rm NCIC}_2$ in \eqref{NCIC} are based on the bias estimates
	\begin{align*}
		\hat{B}_1 = -{\rm tr} (\hat{I} \hat{J}^{-1}), \end{align*}
	and
	\begin{align*}
		\hat{B}_2 = -m+\frac{1}{N+M} \left\{ \sum_{t=1}^N \hat{b}(x^{(t)}) + \sum_{t=1}^M \hat{b}(y^{(t)}) \right\},
	\end{align*}
	respectively.
	We compare these values numerically by a Monte Carlo simulation with $10^5$ repetitions.
	
	Figure~\ref{fig_bias} plots $B$, ${\rm E}_{x,y} (\hat{B}_1)$ and ${\rm E}_{x,y} (\hat{B}_2)$ as a function of $\varepsilon$.
	When $\varepsilon=0$ (well-specified case), the bias $B$ is approximately equal to $-(m-1)=-2$ and both ${\rm E}_{x,y} (\hat{B}_1)$ and ${\rm E}_{x,y} (\hat{B}_2)$ are close to this value.
	When $\varepsilon>0$ (mis-specified case), $B$ and ${\rm E}_{x,y} (\hat{B}_1)$ coincide quite well.
	These results are consistent with Lemma~\ref{lem_bias} and \ref{lem_bias2}.
	Whereas the standard deviation of $\hat{B}_1$ is around 0.1 (see dotted lines in Figure~\ref{fig_bias}), that of $\hat{B}_2$ is smaller than $10^{-8}$.
	Thus, ${\rm NCIC}_2$ has much smaller variance than ${\rm NCIC}_1$.
	This is analogous to the fact that TIC has much larger variance than AIC \citep{Burnham}.
	Interestingly, the absolute bias $|B|$ decreases with $\varepsilon$, whereas it increases with $\varepsilon$ for normalized models (see Fig.~1 of Konishi and Kitagawa, 1996). 
	Thus, just using the number of parameters as the bias correction term may be fairly useful and robust in practice, just like AIC is attractive in that the bias correction term is the  number of parameters.
	Therefore, ${\rm NCIC}_2$ is recommended to use when the model is considered to be not badly mis-specified.
	This situation is quite similar to that of TIC and AIC \citep[see][Section 2.3]{Burnham}.
	
	\begin{figure}
		\begin{center}
			\includegraphics[width=8cm]{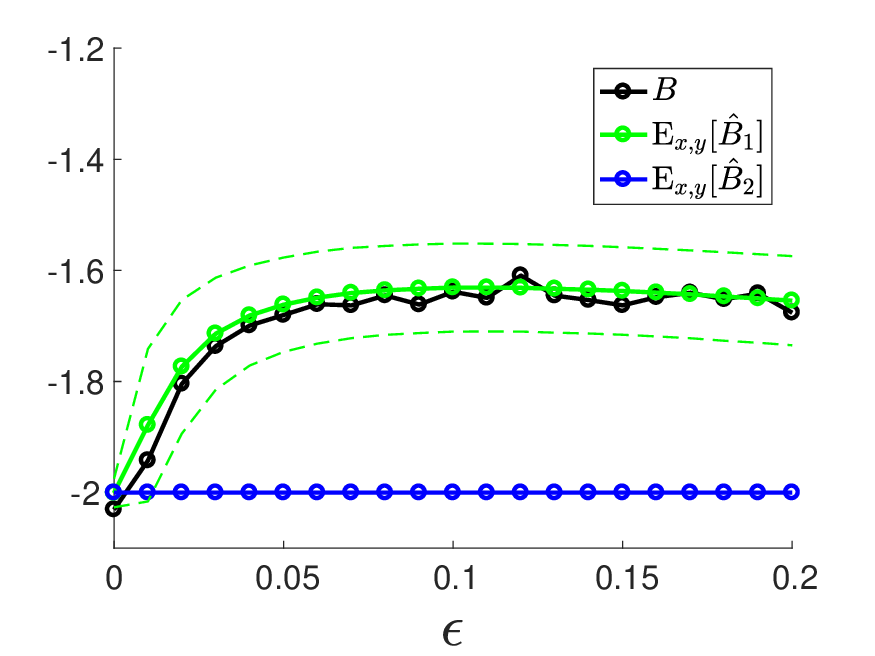}
		\end{center}
		\caption{Comparison of the true bias $B$ (black) and the bias estimates $\hat{B}_1$ (green, with standard deviation) and $\hat{B}_2$ (blue, standard deviation $< 10^{-8}$) in NCIC. Here, $\varepsilon=0$ means a well-specified model.}
		\label{fig_bias}
	\end{figure}
	
	\subsubsection{SMIC}
	We generated $N=10^3$ independent samples from the two-component Gaussian mixture distribution $(1-\varepsilon) \cdot {\rm N} (0,1) + \varepsilon \cdot {\rm N} (0,10)$.
	Then, we applied score matching to fit the normal distribution \eqref{ngaussian}.
	When $\varepsilon=0$, the true distribution is included in the model \eqref{ngaussian}.
	This experimental setting follows \cite{Konishi96}.
	In this case, the model is exponential family and the functions in \eqref{exp_SM} is
	\begin{align*}
		\Gamma(x) = \frac{1}{N} \sum_{t=1}^N \begin{pmatrix} 8 x_t^2 & 4 x_t \\ 4 x_t & 2 \end{pmatrix}, \quad g(x) = \begin{pmatrix} 4 \\ 0 \end{pmatrix}, \quad c(x)=0.
	\end{align*}
	
	In SMIC, the true bias $B= N {\rm E}_q \{ \hat{d}_{{\rm SM}} (\hat{\theta}_{{\rm SM}}) \} - N {\rm E}_q \{ d_{{\rm SM}}(q,\hat{p}) \}$ is estimated by $\hat{B} = -{\rm tr} ( \hat{I} \hat{J}^{-1} )$.
	Figure~\ref{fig_bias_SMIC} plots $B$ and ${\rm E}_q (\hat{B})$ as a function of $\varepsilon$.
	These values were computed by a Monte Carlo simulation with $10^5$ repetitions.
	Consistent with Lemma~\ref{lem_bias_SM}, $B$ and ${\rm E}_q (\hat{B})$ coincide quite well.
	Note that the bias goes down before going up again as $\epsilon$ increases.
	
	\begin{figure}
		\begin{center}
			\includegraphics[width=8cm]{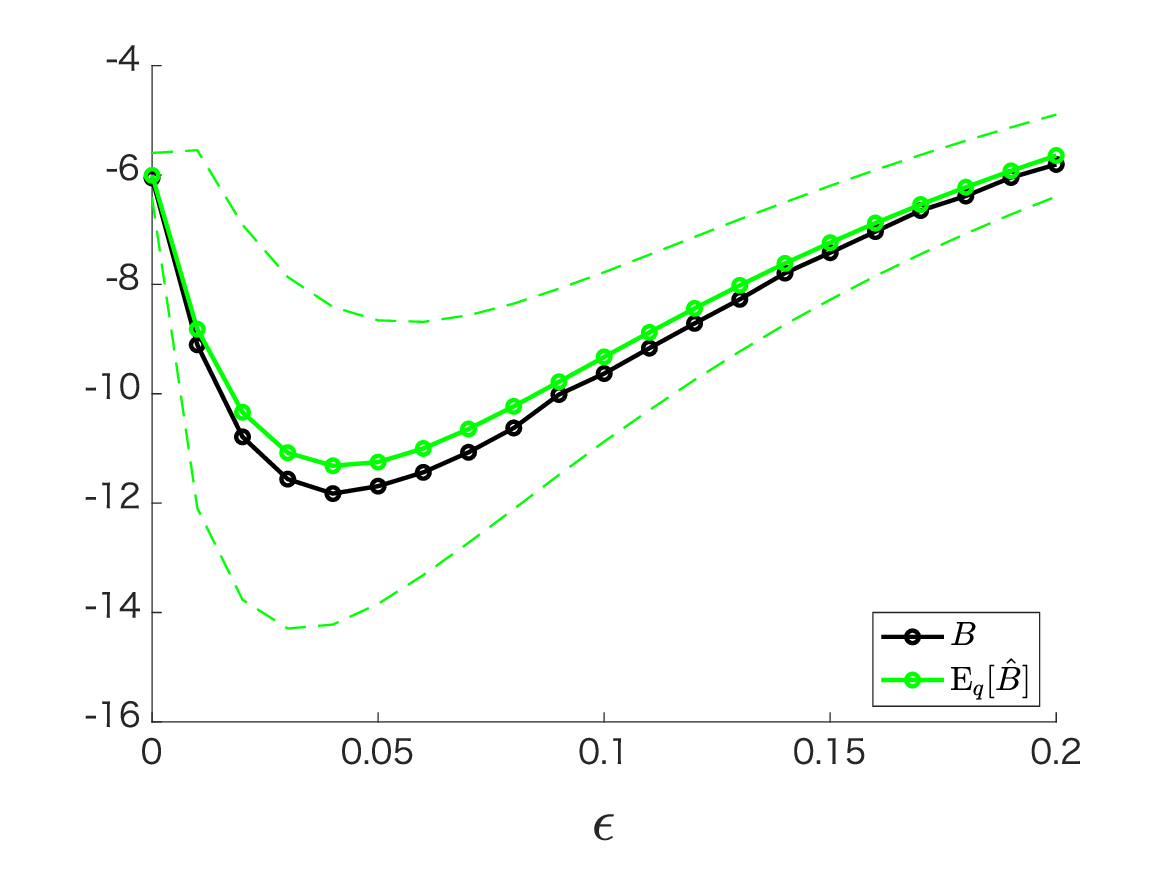}
		\end{center}
		\caption{Comparison of the true bias $B$ (black) and the bias estimate $\hat{B}$ (green, with standard deviation) in SMIC.}
		\label{fig_bias_SMIC}
	\end{figure}
	
	\subsection{Gaussian graphical model}
	
	Next, we apply ${\rm NCIC}$ and ${\rm SMIC}$ to edge selection of the Gaussian graphical model (GGM) \citep{Lauritzen} and compare their performance with AIC.
	
	Let $G=(V,E)$ be an undirected graph where $V=\{ 1,\dots,d \}$. 
	Then, the GGM with graph $G$ is defined as
	\begin{align}
		p(x \mid \Sigma) = \frac{1}{(2 \pi)^{d/2} (\det \Sigma)^{1/2}} \exp \left( -\frac{1}{2} x^{\top} \Sigma^{-1} x \right), \quad x \in \mathbb{R}^d, \label{GGM}
	\end{align}
	where $\Sigma \in \mathbb{R}^{d \times d}$ is a positive definite matrix satisfying $(\Sigma^{-1})_{ij}=0$ for $(i,j) \not\in E$ and the normalization constant is obtained in closed form.
	The zero-nonzero pattern of the precision matrix $\Sigma^{-1}$ specifies the conditional independence structure of $X=(X_1,\cdots,X_d)$: if $(\Sigma^{-1})_{ij}=0 \ (i \neq j)$, then $X_i$ and $X_j$ are  independent conditionally on the other variables $X_k \ (k \neq i,j)$.
	Thus, we consider selection of the graph $G$.

	Following \cite{Drton}, we generated $N$ independent samples $x^{(1)},\dots,x^{(N)}$ from ${\rm N}(0,\Sigma)$ with
	\begin{align}
		\Sigma^{-1} = \begin{pmatrix} 1 & \sigma^{12} & 0 \\ \sigma^{12} & 1 & 0.55 \\ 0 & 0.55 & 1 \end{pmatrix}, \label{Drton_SIGMA}
	\end{align}
	where the value of $\sigma^{12}$ is set to $0.2$, $0.3$ or $0.5$.
	This distribution corresponds to the GGM \eqref{GGM} with the path graph of size $d=3$: $G_3=(V_3,E_3)$ where $V_3 = \{1,2,3\}$ and $E_3=\{(1,2),(2,3)\}$.
	Then, we fitted $2^{d(d-1)/2}=8$ GGMs \eqref{GGM} with each possible $G$ to $x^{(1)},\dots,x^{(N)}$ by using NCE, score matching or maximum likelihood estimation (MLE).
	Namely, we estimated both diagonal and off-diagonal elements of $\Sigma^{-1}$ under the constraint $(\Sigma^{-1})_{ij}=0$ for $(i,j) \not\in E$.
	For NCE, we generated $M=N$ noise samples $y^{(1)},\dots,y^{(M)}$ from the normal distribution with the same mean and covariance with $x^{(1)},\dots,x^{(N)}$.
	For MLE, we used CVX, a MATLAB package for convex programming \citep{cvx}.
	We selected $G$ that corresponds to the GGM with the minimum ${\rm NCIC}_1$, ${\rm NCIC}_2$, ${\rm SMIC}$ or ${\rm AIC}$.
	We repeated the simulation 1000 times.
	
	Tables~\ref{tab_GGM1}--\ref{tab_GGM3} present the detection probabilities of each edge for $N=100$, $N=200$ and $N=1000$, respectively.
	For all criteria, the edges in $G_3$, namely $(1,2)$ and $(2,3)$, are selected more frequently than the edge absent in $G_3$, namely $(1,3)$, especially when $N$ is large.
	Furthermore, the frequency of selecting the edge $(1,2)$ increases with the magnitude of $\sigma^{12}$.
	SMIC attains almost the same performance with AIC, which is consistent with the fact that the score matching estimator coincides with the MLE for Gaussian models \citep{SM}.
	On the other hand, the performance of NCIC is a little worse than SMIC and AIC, which is reasonable because NCE has larger asymptotic variance than MLE \citep{Uehara}.
	
	\begin{table}
		\caption{Detection probabilities of each edge in the GGM \eqref{GGM} for $N=100$ when (a) $\sigma^{12}=0.2$, (b) $\sigma^{12}=0.3$ and (c) $\sigma^{12}=0.5$. The true edges are $(1,2)$ and $(2,3)$.}
		\label{tab_GGM1}
		\centering	
		\begin{minipage}{0.5\textwidth}
			(a)\\
			\begin{tabular}{|r|c|c|c|c|} \hline
				& NCIC${}_1$ & NCIC${}_2$ & SMIC & AIC  \\ \hline
				(1,2) & 0.515 & 0.481 & 0.790 & 0.783 \\
				(1,3) & 0.187 & 0.170 & 0.199 & 0.210 \\
				(2,3) & 0.945 & 0.928 & 1.000 & 1.000 \\ \hline
			\end{tabular}
		\end{minipage}\\
		\begin{minipage}{0.5\textwidth}
			(b)\\
			\begin{tabular}{|r|c|c|c|c|} \hline
				& NCIC${}_1$ & NCIC${}_2$ & SMIC & AIC  \\ \hline
				(1,2) & 0.750 & 0.706 & 0.966 & 0.971 \\
				(1,3) & 0.198 & 0.181 & 0.167 & 0.165 \\
				(2,3) & 0.947 & 0.930 & 1.000 & 1.000 \\ \hline
			\end{tabular}
		\end{minipage}\\
		\begin{minipage}{0.5\textwidth}
			(c)\\
			\begin{tabular}{|r|c|c|c|c|} \hline
				& NCIC${}_1$ & NCIC${}_2$ & SMIC & AIC  \\ \hline
				(1,2) & 0.943 & 0.926 & 1.000 & 1.000 \\
				(1,3) & 0.190 & 0.171 & 0.145 & 0.145 \\
				(2,3) & 0.953 & 0.941 & 1.000 & 1.000 \\ \hline
			\end{tabular}
		\end{minipage}
	\end{table}
	
	\begin{table}
		\caption{Detection probabilities of each edge in the GGM \eqref{GGM} for $N=200$ when (a) $\sigma^{12}=0.2$, (b) $\sigma^{12}=0.3$ and (c) $\sigma^{12}=0.5$. The true edges are $(1,2)$ and $(2,3)$.}
		\label{tab_GGM2}
		\centering	
		\begin{minipage}{0.5\textwidth}
			(a)\\
			\begin{tabular}{|r|c|c|c|c|} \hline
				& NCIC${}_1$ & NCIC${}_2$ & SMIC & AIC  \\ \hline
				(1,2) & 0.749 & 0.719 & 0.938 & 0.936 \\ 
				(1,3) & 0.218 & 0.210 & 0.167 & 0.170 \\ 
				(2,3) & 1.000 & 0.999 & 1.000 & 1.000 \\ \hline 
			\end{tabular}
		\end{minipage}\\
		\begin{minipage}{0.5\textwidth}
			(b)\\
			\begin{tabular}{|r|c|c|c|c|} \hline
				& NCIC${}_1$ & NCIC${}_2$ & SMIC & AIC  \\ \hline
				(1,2) & 0.937 & 0.925 & 1.000 & 0.999 \\ 
				(1,3) & 0.172 & 0.162 & 0.123 & 0.137 \\ 
				(2,3) & 1.000 & 1.000 & 1.000 & 1.000 \\ \hline 
			\end{tabular}
		\end{minipage}\\
		\begin{minipage}{0.5\textwidth}
			(c)\\
			\begin{tabular}{|r|c|c|c|c|} \hline
				& NCIC${}_1$ & NCIC${}_2$ & SMIC & AIC  \\ \hline
				(1,2) & 0.997 & 0.997 & 1.000 & 1.000 \\ 
				(1,3) & 0.147 & 0.138 & 0.147 & 0.139 \\ 
				(2,3) & 0.997 & 0.997 & 1.000 & 1.000 \\ \hline 
			\end{tabular}
		\end{minipage}
	\end{table}
	
	\begin{table}
		\caption{Detection probabilities of each edge in the GGM \eqref{GGM} for $N=1000$ when (a) $\sigma^{12}=0.2$, (b) $\sigma^{12}=0.3$ and (c) $\sigma^{12}=0.5$. The true edges are $(1,2)$ and $(2,3)$.}
		\label{tab_GGM3}
		\centering	
		\begin{minipage}{0.5\textwidth}
			(a)\\
			\begin{tabular}{|r|c|c|c|c|} \hline
				& NCIC${}_1$ & NCIC${}_2$ & SMIC & AIC  \\ \hline
				(1,2) & 0.999 & 0.999 & 1.000 & 1.000 \\ 
				(1,3) & 0.167 & 0.162 & 0.145 & 0.143 \\ 
				(2,3) & 1.000 & 1.000 & 1.000 & 1.000 \\ \hline 
			\end{tabular}
		\end{minipage}\\
		\begin{minipage}{0.5\textwidth}
			(b)\\
			\begin{tabular}{|r|c|c|c|c|} \hline
				& NCIC${}_1$ & NCIC${}_2$ & SMIC & AIC  \\ \hline
				(1,2) & 1.000 & 1.000 & 1.000 & 1.000 \\ 
				(1,3) & 0.169 & 0.166 & 0.155 & 0.159 \\ 
				(2,3) & 1.000 & 1.000 & 1.000 & 1.000 \\ \hline 
			\end{tabular}
		\end{minipage}\\
		\begin{minipage}{0.5\textwidth}
			(c)\\
			\begin{tabular}{|r|c|c|c|c|} \hline
				& NCIC${}_1$ & NCIC${}_2$ & SMIC & AIC  \\ \hline
				(1,2) & 1.000 & 1.000 & 1.000 & 1.000 \\ 
				(1,3) & 0.150 & 0.148 & 0.147 & 0.142 \\ 
				(2,3) & 1.000 & 1.000 & 1.000 & 1.000 \\ \hline 
			\end{tabular}
		\end{minipage}
	\end{table}
	
	\subsection{Truncated Gaussian graphical model}\label{subsec:tGGM}
	Now, we apply ${\rm NCIC}$ and ${\rm SMIC}$ to edge selection of the truncated Gaussian graphical model \citep{Lin}, which has an intractable normalization constant.
	
	For an undirected graph $G=(V,E)$ with $V=\{ 1,\dots,d \}$, the truncated GGM with graph $G$ is defined as
	\begin{align}
		p(x \mid \Sigma) \propto \exp \left( -\frac{1}{2} x^{\top} \Sigma^{-1} x \right), \quad x \in \mathbb{R}_+^d, \label{tGGM}
	\end{align}
	where $\Sigma \in \mathbb{R}^{d \times d}$ is a positive definite matrix satisfying $(\Sigma^{-1})_{ij}=0$ for $(i,j) \not\in E$.
	Due to the truncation to the positive orthant $\mathbb{R}_+^d$, the normalization constant of the truncated GGM \eqref{tGGM} is computationally intractable.
	Similarly to the original GGM \eqref{GGM}, $X_i$ and $X_j$ are  independent conditionally on the other variables $X_k \ (k \neq i,j)$ if $(\Sigma^{-1})_{ij}=0$.
	Thus, we consider selection of the graph $G$.
	
	Similarly to the previous subsection, we considered edge selection from $N$ independent samples $x^{(1)},\dots,x^{(N)}$ from a truncated GGM \eqref{tGGM} with covariance \eqref{Drton_SIGMA} where $\sigma^{12}$ is set to $0.2$, $0.3$ or $0.5$.
	For NCE, we generated $M=N$ noise samples $y^{(1)},\dots,y^{(M)}$ from the product of the coordinate-wise exponential distributions with the same mean as $x^{(1)},\dots,x^{(N)}$.
	We selected $G$ that corresponds to the truncated GGM with the minimum ${\rm NCIC}_1$, ${\rm NCIC}_2$ or ${\rm SMIC}$.
	For comparison with model selection by leave-one-out cross-validation (LOOCV), we also selected $G$ by minimizing NCE-CV \eqref{NCECV} or SM-CV \eqref{SMCV}.
	We repeated the simulation 1000 times.
	
	Tables~\ref{tab_tGGM1}--\ref{tab_tGGM3} present the detection probabilities of each edge for $N=100$, $N=200$ and $N=1000$, respectively.
	The behaviors of NCIC and SMIC are qualitatively the same with Tables~\ref{tab_GGM1}--\ref{tab_GGM3}.
	Namely, the true edges $(1,2)$ and $(2,3)$ are selected more frequently than the false edge $(1,3)$ especially when $N$ is large, and the frequency of selecting the edge $(1,2)$ increases with the magnitude of $\sigma^{12}$.
	Also, the model selection performances of NCIC and SMIC are comparable to those of NCE-CV and SM-CV, respectively, which is analogous to the asymptotic equivalence of model selection by AIC and LOOCV for normalized models \citep{Stone}.
	Note that NCE-CV and SM-CV take approximately $N$ times more computational cost than NCIC and SMIC, respectively.

	\begin{table}
		\caption{Detection probabilities of each edge in the truncated GGM \eqref{tGGM} for $N=100$ when (a) $\sigma^{12}=0.2$, (b) $\sigma^{12}=0.3$ and (c) $\sigma^{12}=0.5$. The true edges are $(1,2)$ and $(2,3)$.}
		\label{tab_tGGM1}
		\centering	
		\begin{minipage}{0.6\textwidth}
			(a)\\
			\begin{tabular}{|r|c|c|c|c|c|} \hline
				& NCIC${}_1$ & NCIC${}_2$ & NCE-CV & SMIC & SM-CV  \\ \hline
				(1,2) & 0.287 & 0.221 & 0.210 & 0.360 & 0.243 \\ 
				(1,3) & 0.187 & 0.155 & 0.132 & 0.303 & 0.184 \\ 
				(2,3) & 0.495 & 0.429 & 0.422 & 0.617 & 0.513 \\ \hline 
			\end{tabular}
		\end{minipage}\\
		\begin{minipage}{0.6\textwidth}
			(b)\\
			\begin{tabular}{|r|c|c|c|c|c|} \hline
				& NCIC${}_1$ & NCIC${}_2$ & NCE-CV & SMIC & SM-CV  \\ \hline
				(1,2) & 0.316 & 0.251 & 0.249 & 0.449 & 0.337 \\ 
				(1,3) & 0.205 & 0.182 & 0.152 & 0.304 & 0.205 \\ 
				(2,3) & 0.522 & 0.451 & 0.449 & 0.603 & 0.506 \\ \hline 
			\end{tabular}
		\end{minipage}\\
		\begin{minipage}{0.6\textwidth}
			(c)\\
			\begin{tabular}{|r|c|c|c|c|c|} \hline
				& NCIC${}_1$ & NCIC${}_2$ & NCE-CV & SMIC & SM-CV  \\ \hline
				(1,2) & 0.460 & 0.405 & 0.394 & 0.592 & 0.478 \\ 
				(1,3) & 0.206 & 0.173 & 0.147 & 0.308 & 0.204 \\ 
				(2,3) & 0.496 & 0.427 & 0.430 & 0.594 & 0.498 \\ \hline 
			\end{tabular}
		\end{minipage}
	\end{table}
	
	\begin{table}
		\caption{Detection probabilities of each edge in the truncated GGM \eqref{tGGM} for $N=200$ when (a) $\sigma^{12}=0.2$, (b) $\sigma^{12}=0.3$ and (c) $\sigma^{12}=0.5$. The true edges are $(1,2)$ and $(2,3)$.}
		\label{tab_tGGM2}
		\centering	
		\begin{minipage}{0.6\textwidth}
			(a)\\
			\begin{tabular}{|r|c|c|c|c|c|} \hline
				& NCIC${}_1$ & NCIC${}_2$ & NCE-CV & SMIC & SM-CV  \\ \hline
				(1,2) & 0.302 & 0.256 & 0.265 & 0.383 & 0.302 \\ 
				(1,3) & 0.195 & 0.178 & 0.156 & 0.278 & 0.198 \\ 
				(2,3) & 0.689 & 0.642 & 0.651 & 0.730 & 0.667 \\ \hline 
			\end{tabular}
		\end{minipage}\\
		\begin{minipage}{0.6\textwidth}
			(b)\\
			\begin{tabular}{|r|c|c|c|c|c|} \hline
				& NCIC${}_1$ & NCIC${}_2$ & NCE-CV & SMIC & SM-CV  \\ \hline
				(1,2) & 0.436 & 0.371 & 0.392 & 0.482 & 0.424 \\ 
				(1,3) & 0.191 & 0.173 & 0.156 & 0.251 & 0.179 \\ 
				(2,3) & 0.704 & 0.659 & 0.670 & 0.744 & 0.682 \\ \hline 
			\end{tabular}
		\end{minipage}\\
		\begin{minipage}{0.6\textwidth}
			(c)\\
			\begin{tabular}{|r|c|c|c|c|c|} \hline
				& NCIC${}_1$ & NCIC${}_2$ & NCE-CV & SMIC & SM-CV  \\ \hline
				(1,2) & 0.647 & 0.585 & 0.613 & 0.713 & 0.646 \\ 
				(1,3) & 0.201 & 0.185 & 0.161 & 0.284 & 0.191 \\ 
				(2,3) & 0.692 & 0.639 & 0.670 & 0.728 & 0.665 \\ \hline 
			\end{tabular}
		\end{minipage}
	\end{table}
	
	\begin{table}
		\caption{Detection probabilities of each edge in the truncated GGM \eqref{tGGM} for $N=1000$ when (a) $\sigma^{12}=0.2$, (b) $\sigma^{12}=0.3$ and (c) $\sigma^{12}=0.5$. The true edges are $(1,2)$ and $(2,3)$.}
		\label{tab_tGGM3}
		\centering	
		\begin{minipage}{0.6\textwidth}
			(a)\\
			\begin{tabular}{|r|c|c|c|c|c|} \hline
				& NCIC${}_1$ & NCIC${}_2$ & NCE-CV & SMIC & SM-CV  \\ \hline
				(1,2) & 0.613 & 0.586 & 0.604 & 0.623 & 0.599 \\ 
				(1,3) & 0.171 & 0.162 & 0.162 & 0.184 & 0.156 \\ 
				(2,3) & 0.996 & 0.996 & 0.996 & 0.993 & 0.992 \\ \hline 
			\end{tabular}
		\end{minipage}\\
		\begin{minipage}{0.6\textwidth}
			(b)\\
			\begin{tabular}{|r|c|c|c|c|c|} \hline
				& NCIC${}_1$ & NCIC${}_2$ & NCE-CV & SMIC & SM-CV  \\ \hline
				(1,2) & 0.839 & 0.820 & 0.830 & 0.829 & 0.809 \\ 
				(1,3) & 0.163 & 0.160 & 0.154 & 0.193 & 0.167 \\ 
				(2,3) & 0.996 & 0.996 & 0.996 & 0.995 & 0.993 \\ \hline 
			\end{tabular}
		\end{minipage}\\
		\begin{minipage}{0.6\textwidth}
			(c)\\
			\begin{tabular}{|r|c|c|c|c|c|} \hline
				& NCIC${}_1$ & NCIC${}_2$ & NCE-CV & SMIC & SM-CV  \\ \hline
				(1,2) & 0.985 & 0.983 & 0.982 & 0.971 & 0.965 \\ 
				(1,3) & 0.183 & 0.172 & 0.173 & 0.195 & 0.170 \\ 
				(2,3) & 0.995 & 0.995 & 0.995 & 0.990 & 0.988 \\ \hline 
			\end{tabular}
		\end{minipage}
	\end{table}
	
	To verify the performance of NCIC and SMIC in higher dimension, we conducted another experiment with $N=1000$, $d=16$ and $G$ given by the grid graph in Figure~\ref{fig_grid}.
	The nonzero off-diagonal entries of $\Sigma^{-1}$ were all set to 0.5 and the diagonal entries of $\Sigma^{-1}$ were set to a common value so that the minimum eigenvalue of $\Sigma^{-1}$ is 0.1.
	Since the number of possible graph structures is too large in this case, we narrowed down the candidate graphs by using a similar procedure to the graphical LASSO.
	Specifically, we first applied the NCE and score matching with $l_1$-regularization on the off-diagonal entries of $\Sigma^{-1}$.
	For optimization, we employed the accelerated proximal gradient algorithm\footnote{We used the MATLAB program from \url{https://github.com/bodono/apg}.}.
	By changing the value of the regularization parameter, a sequence of candidate graphs was obtained for both NCE and score matching.
	Then, we fitted the graphical models for candidate graphs without regularization to calculate NCIC and SMIC.
	Note that such a procedure is also used for LASSO \citep{Belloni}.
	For NCE, we generated $M=N$ noise samples $y^{(1)},\dots,y^{(M)}$ from the product of the coordinate-wise exponential distributions with the same mean as $x^{(1)},\dots,x^{(N)}$.
	We repeated the simulation 1000 times.
	Table~\ref{tab_tGGM4} presents the true positive rate (the probability of selecting the edges in $G$) and false positive rate (the probability of selecting the edges not in $G$).
	It indicates that both NCIC and SMIC select the edges in $G$ much more frequently than those not in $G$.
	The detection performance is not very strong compared to Table~\ref{tab_tGGM3}, which may be related to the difficulty in selecting an appropriate noise distribution in higher dimensions.
	
	\begin{figure}[h]
		\begin{center}
			\begin{tikzpicture}[every node/.style={circle,draw}]
				\node (A) at (0,0) {};
				\node (B) at (1,0) {};
				\node (C) at (2,0) {};
				\node (D) at (3,0) {};
				\node (E) at (0,1) {};
				\node (F) at (1,1) {};
				\node (G) at (2,1) {};
				\node (H) at (3,1) {};
				\node (I) at (0,2) {};
				\node (J) at (1,2) {};
				\node (K) at (2,2) {};
				\node (L) at (3,2) {};
				\node (M) at (0,3) {};
				\node (N) at (1,3) {};
				\node (O) at (2,3) {};
				\node (P) at (3,3) {};
				\foreach \u \v in {A/B,B/C,C/D,E/F,F/G,G/H,I/J,J/K,K/L,M/N,N/O,O/P,A/E,E/I,I/M,B/F,F/J,J/N,C/G,G/K,K/O,D/H,H/L,L/P}
				\draw (\u) -- (\v);
			\end{tikzpicture}
		\end{center}
		\caption{Grid graph.} 
		\label{fig_grid}
	\end{figure}
	
	\begin{table}
		\caption{True and false positive rates for the truncated GGM \eqref{tGGM} with the grid graph.}
		\label{tab_tGGM4}
		\vspace{0.1in}
		\begin{center}	
			\begin{tabular}{|r|c|c|c|c|c|} \hline
				& NCIC${}_1$ & NCIC${}_2$ & SMIC  \\ \hline
				true positive & 0.711 & 0.662 & 0.759 \\ 
				false positive & 0.191 & 0.156 & 0.207 \\ \hline 
			\end{tabular}
		\end{center}	
	\end{table}
	
	\section{Application to real data}
	In this section, we apply NCIC and SMIC to model selection for real data of natural image, RNAseq and wind direction.
	
	\subsection{Natural image data}
	First, we apply NCIC to analysis of natural image data with the energy-based overcomplete independent component analysis (ICA) model \citep{Teh} defined by
	\begin{align}
		\log p(x \mid w) = \sum_{b=1}^B G(w_b^{\top} x) - \log Z(w_1,\dots,w_B), \quad x \in \mathbb{R}^d, \label{overICA}
	\end{align}
	where $w=(w_1,\dots,w_B)$ with $w_i \in \mathbb{R}^d$ is the overcomplete set of filters and $G(u) = -|u|$\footnote{Although this model does not satisfy the smoothness assumption (N2), the non-smoothness here is essentially the same with that in median estimation \citep[Example 5.24]{vV}. The asymptotic variance is still similarly obtained following Theorem 5.23 of \cite{vV} by assuming the smoothness of the expectation of the objective function rather than the objective function itself. We leave the rigorous argument to future work.}.
	This model is related to ICA with overcomplete bases \citep{ICA_book} and extracts useful features of data.
	In previous work, the number of filters $B \ (>d)$ has been selected arbitrarily.
	Here, we determine $B$ by minimizing NCIC. 
	
	We used $N=5 \times 10^4$ image patches of 8 $\times$ 8 pixels taken from natural images.
	This data is provided in Hoyer's \textit{imageica} package.\footnote{\url{http://www.cs.helsinki.fi/patrik.hoyer/}}
	Following \cite{SM}, we removed the DC component and then applied whitening.
	Thus, the data dimension is $d=63$.    
	For NCE, we used $M=5 \times 10^4$ noise samples from the Gaussian distribution with the same mean and covariance as data.
	
	Figure~\ref{fig_overICA} (a) plots ${\rm NCIC}_2$ as a function of $B$.
	${\rm NCIC}_2$ takes minimum at $B=118$.
	Some of the estimated filters $w_1,\dots,w_B$ when $B=118$ are shown in Figure~\ref{fig_overICA} (b).
	Here, the filters are converted back to the original space from the whitened space for visualization.
	Similarly to the result by score matching \citep{SM}, many filters represent localized patterns in image patches \citep{Olshausen,nis_book}.
	Namely, they take (significantly) nonzero value on only limited regions of images.
	Note that the computation of $\hat{I}$ and $\hat{J}$ in ${\rm NCIC}_1$ was computationally intractable in this case due to the large sample size $N$.
	We did not consider SMIC here because the calculation of $\hat{J}$ in SMIC was analytically complex.
	
	\begin{figure}[h]
		\centering
		\begin{minipage}{0.45\textwidth}
			(a)\\
			\includegraphics[width=7cm]{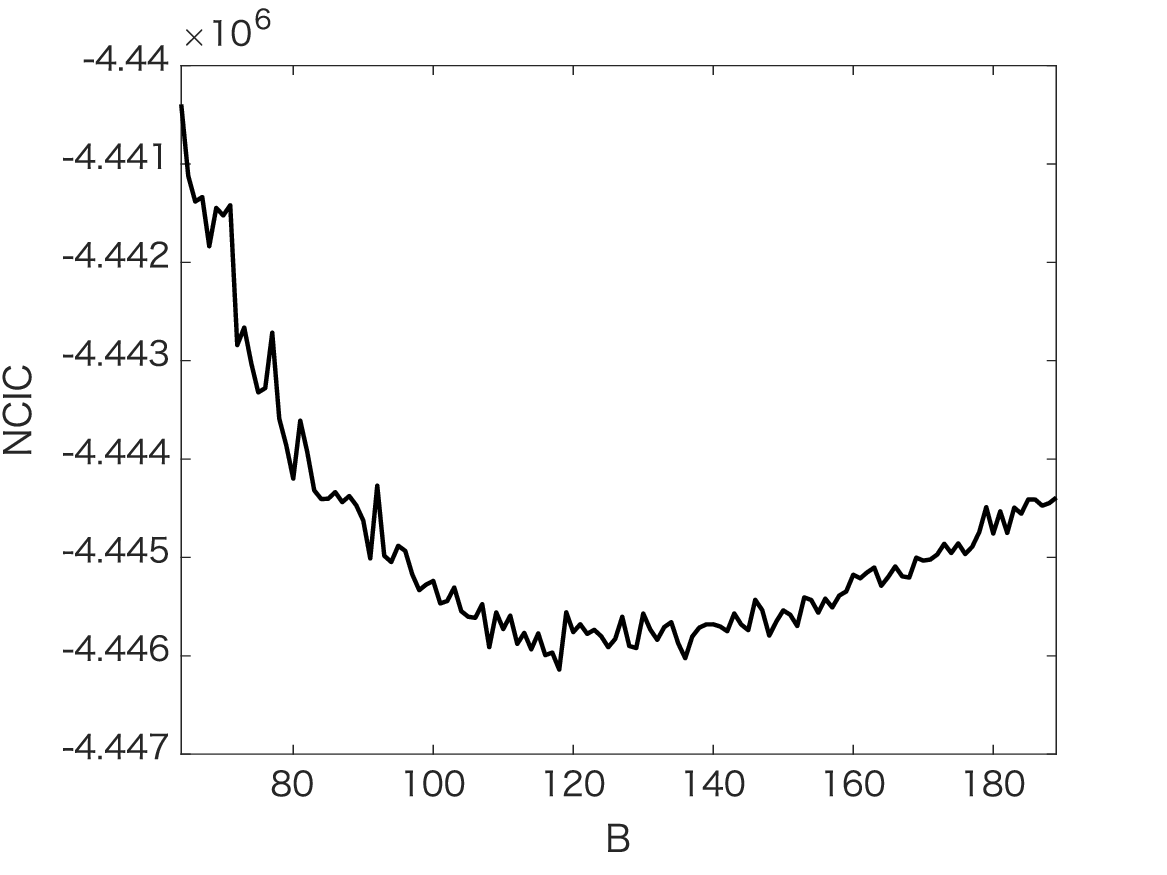}
		\end{minipage}
		\begin{minipage}{0.45\textwidth}
			(b)\\
			\includegraphics[width=7cm]{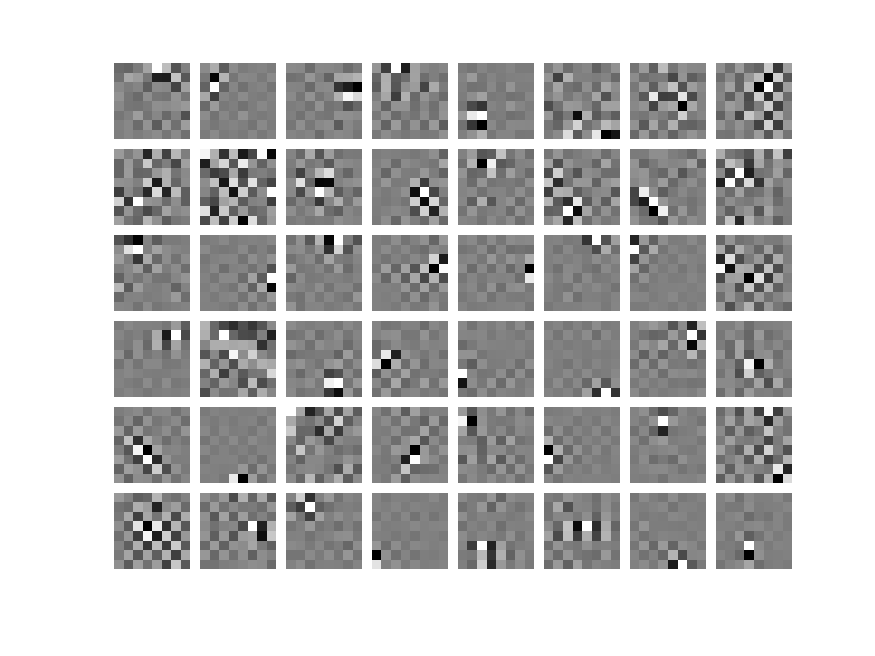}
		\end{minipage}
		\caption{(a) ${\rm NCIC}_2$ of overcomplete ICA models \eqref{overICA} for natural image data. (b) Estimated filters when $B=118$.} 
		\label{fig_overICA}
	\end{figure}
	
	\subsection{RNAseq data}
	Next, we apply SMIC to comparison of graphical model for the RNAseq data used in \cite{Lin}.
	This is a non-negative multivariate data of sample size $N=487$.
	We analyze $d=40$ among 330 genes that do not contain missing values and have coefficient of variation larger than one.
	
	To investigate interaction between genes, \cite{Lin} fitted the truncated Gaussian graphical model \eqref{tGGM} to RNAseq data by $l_1$-regularized score matching, which can be solved by the existing algorithms for LASSO.
	Another possible model is the log-Gaussian graphical model defined by
	\begin{align}
		p(x \mid \mu, \Sigma) \propto \left( \prod_{i=1}^d \frac{1}{x_i} \right) \exp \left( -\frac{1}{2} (\log x-\mu)^{\top} \Sigma^{-1} (\log x-\mu) \right), \quad x \in \mathbb{R}_+^d, \label{lGGM}
	\end{align}
	where $\log$ is applied element-wise.
	Namely, log-transformed data is assumed to follow the usual Gaussian graphical model.
	Note that this model is also an exponential family and thus the objective function of score matching reduces to a quadratic form \eqref{exp_SM}.
	Here, we apply SMIC to determine which of the above two graphical models has better fit to RNAseq data.

	Figure~\ref{fig_RNA} plots SMIC of two graphical models with respect to the number of edges.
	For edge selection, we employed $l_1$ regularized score matching \citep{Lin} for truncated Gaussian graphical models \eqref{tGGM} and graphical LASSO\footnote{We used R package ``glasso" from \url{http://statweb.stanford.edu/~tibs/glasso/}.} for log-Gaussian graphical models \eqref{lGGM}, respectively.
	Namely, we computed the whole regularization paths.
	After edge selection, we fitted the graphical models again by score matching without regularization to calculate SMIC.
	Note that such a procedure is also used for LASSO \citep{Belloni}.
	Figure~\ref{fig_RNA} indicates that SMIC saturates around 400 edges for both models and is smaller for the log-Gaussian graphical model.
	Therefore, the log-Gaussian graphical model has better fit to RNAseq data in this case.
	
	\begin{figure}[h]
		\centering
		\begin{minipage}{0.45\textwidth}
			(a)\\
			\includegraphics[width=7cm]{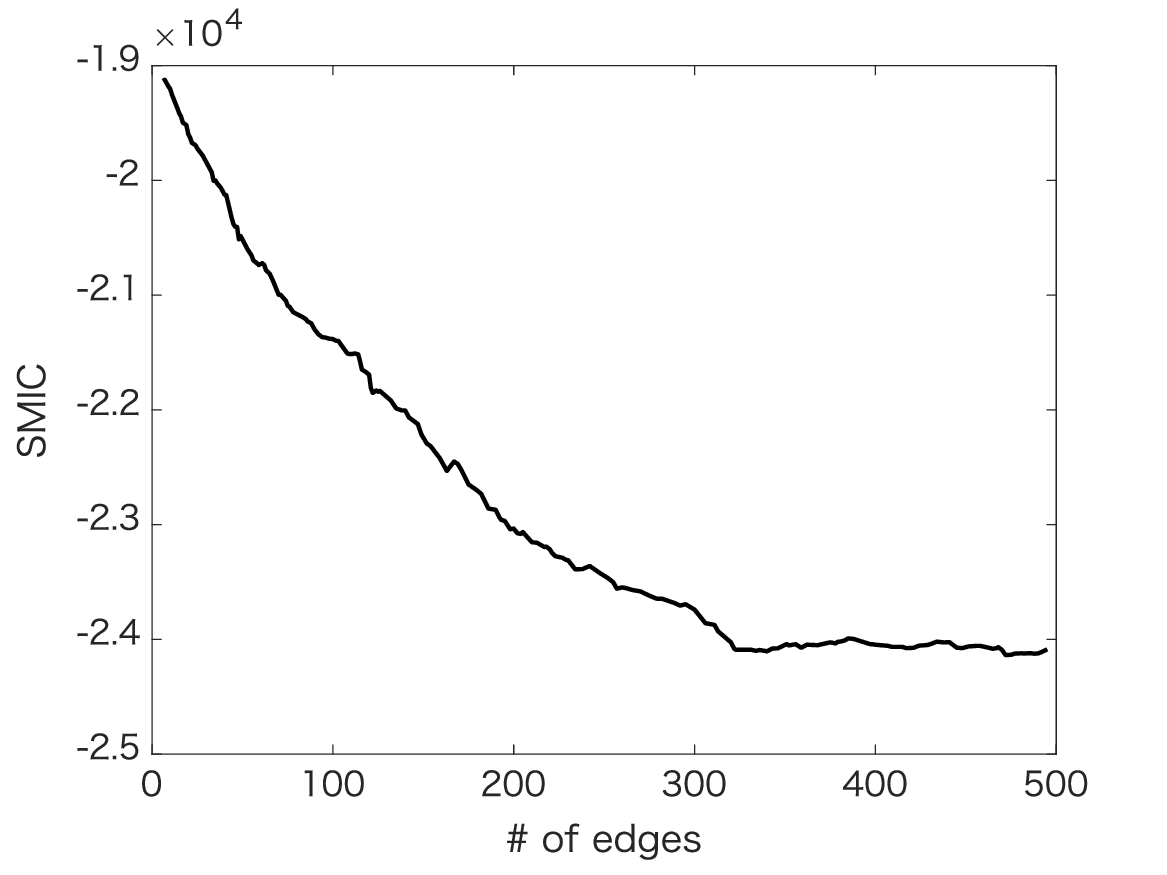}
		\end{minipage}
		\begin{minipage}{0.45\textwidth}
			(b)\\
			\includegraphics[width=7cm]{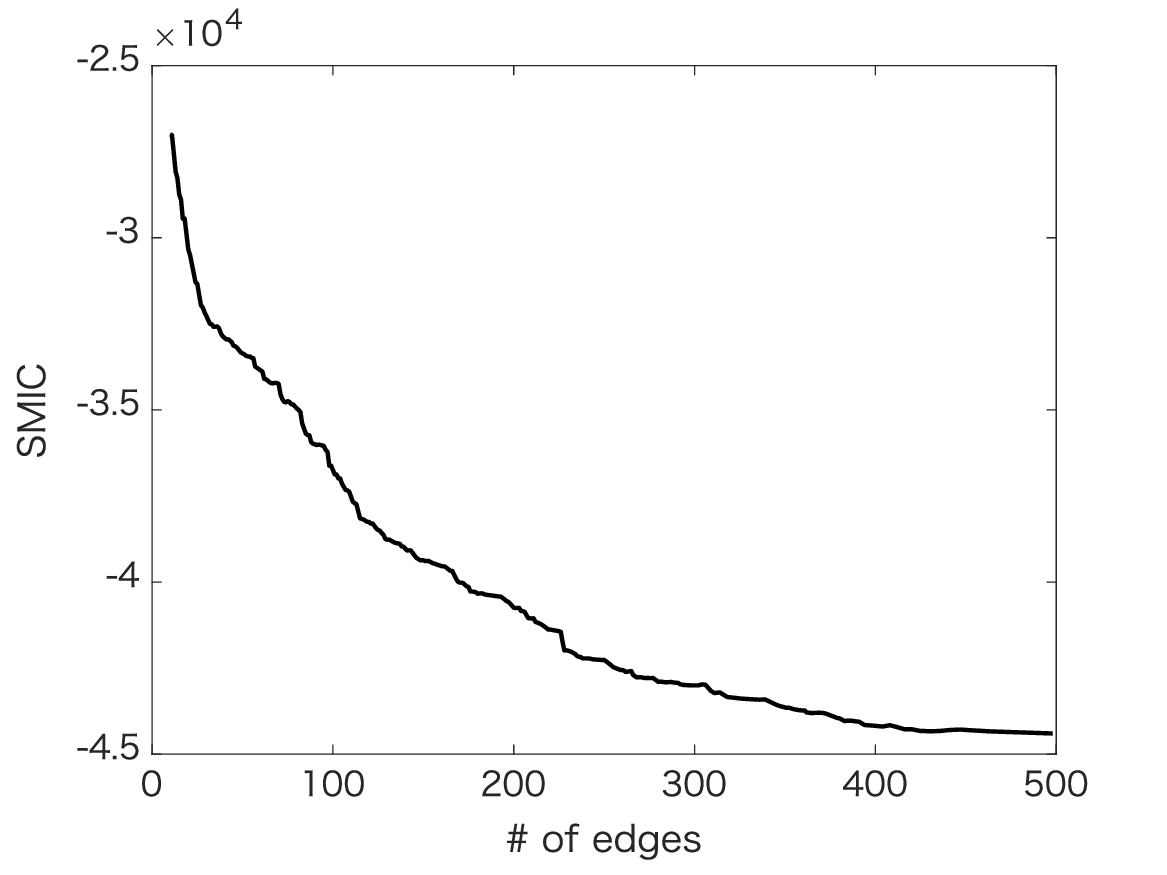}
		\end{minipage}
		\caption{SMIC of (a) truncated Gaussian graphical models \eqref{tGGM} and (b) log-Gaussian graphical models \eqref{lGGM} for RNAseq data. Note that the scale of y-axis is different between (a) and (b).}
		\label{fig_RNA}
	\end{figure}
	
	\subsection{Wind direction data}
	Finally, we apply NCIC to wind direction data.
	Since the wind direction is naturally identified with a vector on the unit circle \citep{Mardia}, we represent it as a circular variable in radians.
	Figure~\ref{fig:wind} shows a 2-d histogram of wind direction at Tokyo on 00:00 ($x_1$) and 12:00 ($x_2$) for $N=365$ days in 2018, which was obtained {from the website of Japan Meteorological Agency}.
	The data are discretized into 16 bins such as north-northeast.
	
	\begin{figure}
		\centering
		\includegraphics[width=8cm]{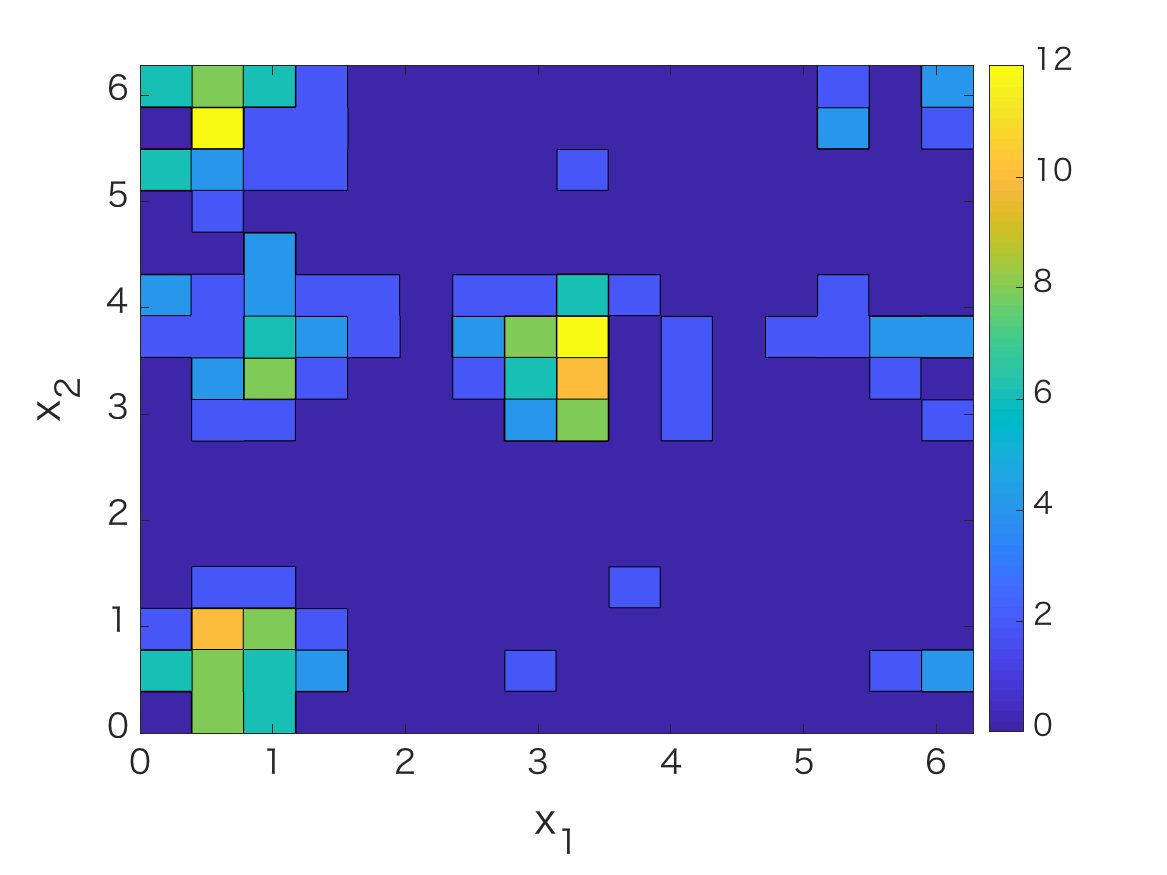} 
		\caption{2-d histogram of wind direction data.}
		\label{fig:wind}
		\vspace{-0.3cm}
	\end{figure}
	
	To describe dependence between two circular variables, \cite{Singh} proposed the bivariate von Mises distribution defined by
	\begin{align}
		{p}(x_1,x_2 \mid \theta) \propto \exp (\kappa_1 \cos(x_1-\mu_1) + \kappa_2 \cos(x_2-\mu_2) + \lambda_{12} \sin(x_1-\mu_1) \sin(x_2-\mu_2)), \label{bvM}
	\end{align}
	where $\theta=(\kappa_1,\kappa_2,\mu_1,\mu_2,\lambda_{12})$ with $\kappa_1 \geq 0$, $\kappa_2 \geq 0$, $0 \leq \mu_1 <2\pi$ and $0 \leq \mu_2 <2\pi$.
	Its normalization constant involves an infinite sum of Bessel functions, which is computationally intractable.
	The parameter $\lambda_{12}$ quantifies the dependency between $x_1$ and $x_2$.
	In particular, $x_1$ and $x_2$ are independent if and only if $\lambda_{12} = 0$.
	
	We fitted the bivariate von Mises distribution \eqref{bvM} to the wind direction data in Figure~\ref{fig:wind} by NCE with $M=1000$ noise samples from the uniform distribution on $[0,2\pi) \times [0,2\pi)$.
	The parameter estimate was
	\begin{align}
		(\hat{\kappa}_1,\hat{\kappa}_2,\hat{\mu}_1,\hat{\mu}_2,\hat{\lambda}_{12})=(0.813,0.440,1.120,4.644,-0.965)
	\end{align}
	and NCIC value was ${\rm NCIC}_2=-1941$.
	We also fitted the bivariate von Mises distribution \eqref{bvM}  with $\lambda_{12} = 0$.
	The parameter estimate was
	\begin{align}
		(\hat{\kappa}_1,\hat{\kappa}_2,\hat{\mu}_1,\hat{\mu}_2)=(0.808,0.430,0.755,4.234)
	\end{align}
	and NCIC value was ${\rm NCIC}_2=-1919$.
	Thus, the former model has better fit than the latter, which implies that the wind direction at Tokyo on 00:00 and 12:00 are dependent.
	
	\section{Extension to non-normalized mixture models}
	In this section, we discuss extension of NCIC to non-normalized mixture models. 
	
	Consider a finite mixture of non-normalized models:
	\begin{align}
		p (x \mid \theta,\pi) &= \sum_{k=1}^K \pi_k \cdot p (x \mid \theta_k), \quad p(x \mid \theta_k) = \frac{1}{Z(\theta_k)} \widetilde{p}(x \mid \theta_k), \label{mix_model}
	\end{align}
	where $\pi_k > 0$, $\sum_{k=1}^K \pi_k=1$ and the normalization constant $Z(\theta_k)$ of each component $p (x \mid \theta_k)$ is intractable.
	Existing methods for estimating non-normalized models are not applicable to \eqref{mix_model} since it includes more than one intractable normalization constant\footnote{For example, a formal extension of score matching becomes intractable because the objective function now involves the normalization constants.}.
	We clarified this point as a footnote.
	Thus, \cite{Matsuda} extended NCE to estimate \eqref{mix_model}.
	Specifically, \eqref{mix_model} is reparametrized as
	\begin{align}
		p (x \mid \theta,c) = \sum_{k=1}^K p(x \mid \theta_k,c_k), \quad 	\log p (x \mid \theta_k,c_k) = \log \widetilde{p} (x \mid \theta_k) + c_k, \label{NCEparam2}
	\end{align}
	where $c=(c_1,\dots,c_K)$ with $c_k = \log \pi_k-\log Z(\theta_k)$.
	Similarly to the original NCE, we consider $c$ as an additional unknown parameter. 
	Then, by generating $M$ noise samples $y^{(1)},\dots,y^{(M)}$ from a noise distribution $n(y)$, the parameter $\xi=(\theta, c)$ is estimated in the same way as the original NCE in \eqref{NCEdef} and \eqref{Jdef},
	that is, we use the definition \eqref{NCEparam2} in the original NCE objective function \eqref{Jdef}.
	This extended NCE has consistency under mild regularity conditions \citep{Matsuda}.
	
	Now, we consider extension of NCIC to non-normalized mixture models.
	The problem setting is essentially the same with Section \ref{sec_NCIC}. 
	Specifically, we have $N$ i.i.d.~samples $x^{(1)},\dots,x^{(N)}$ from an unknown distribution $q(x)$ and estimate a non-normalized mixture model \eqref{NCEparam2} by using the extended NCE.
	Assume that the distribution $p_*(x)=p(x \mid \xi^*)$ with $\xi^* = {\rm arg} \min_{\xi} d_{{\rm NCE}} (q,p_{\xi})$ has exactly $K$ mixture components: $\pi^*_1>0,\dots,\pi^*_K>0$ and $\theta^*_i \neq \theta^*_j \ (i \neq j)$. 
	In this case, the model is regular around $\xi^*$.
	Therefore, Lemma \ref{lem_bias} is valid and so ${\rm NCIC}_1$ in \eqref{NCIC0} is approximately unbiased.
	Also, by replacing $j_m(\xi^*)$ with $h = \sum_{l=m-K+1}^m j_l(\xi^*)$ in the proof, Lemma \ref{lem_bias2} for well-specified cases is valid as well, where the value of $m$ is changed from Section \ref{sec_NCIC} to $m={\rm dim}(\xi)=K ({\rm dim}(\theta_1)+1)$.
	Therefore, we propose
	\begin{align*}
		{\rm NCIC}_2 = N \hat{d}_{{\rm NCE}} (\hat{\xi}_{{\rm NCE}}) + K \left\{ {\rm dim}(\theta_1)+1 \right\} -\frac{1}{N+M} \left\{ \sum_{t=1}^N \hat{b}(x^{(t)}) + \sum_{t=1}^M \hat{b}(y^{(t)}) \right\}
	\end{align*}
	as an approximately unbiased estimator of $N {\rm E}_{x,y} \{ {d}_{{\rm NCE}} (q,\hat{p}) \}$, where $\hat{b}(z)$ is defined as \eqref{b_def}.
	Thus, we can select the number of components $K$ of non-normalized mixture models \eqref{NCEparam2} by minimizing NCIC.
	
	Figure~\ref{fig_gm} shows a result on the non-normalized version of the Gaussian mixture distribution:
	\begin{align}
		p(x \mid \theta,c) = \sum_{k=1}^K \exp (\theta_{k1} x^2 + \theta_{k2} x + c_k). \label{GMM}
	\end{align}
	Here, we generated $N=10^3$ samples from the two-component Gaussian mixture distribution $0.5 \cdot {\rm N} (0,1) + 0.5 \cdot {\rm N} (3,1)$ and applied the extended NCE to estimate \eqref{GMM}.
	The noise distribution was set to the Gaussian distribution with the same mean and variance as data and the noise sample size was set to $M=10^4$.
	For AIC, we computed the maximum likelihood estimator with the MATLAB function \textit{fitgmdist}.
	Both ${\rm NCIC}_2$ and AIC take minimum at the true value $K=2$.
	
	\begin{figure}
		\begin{minipage}{0.45\textwidth}
			\begin{center}
				\includegraphics[width=6cm]{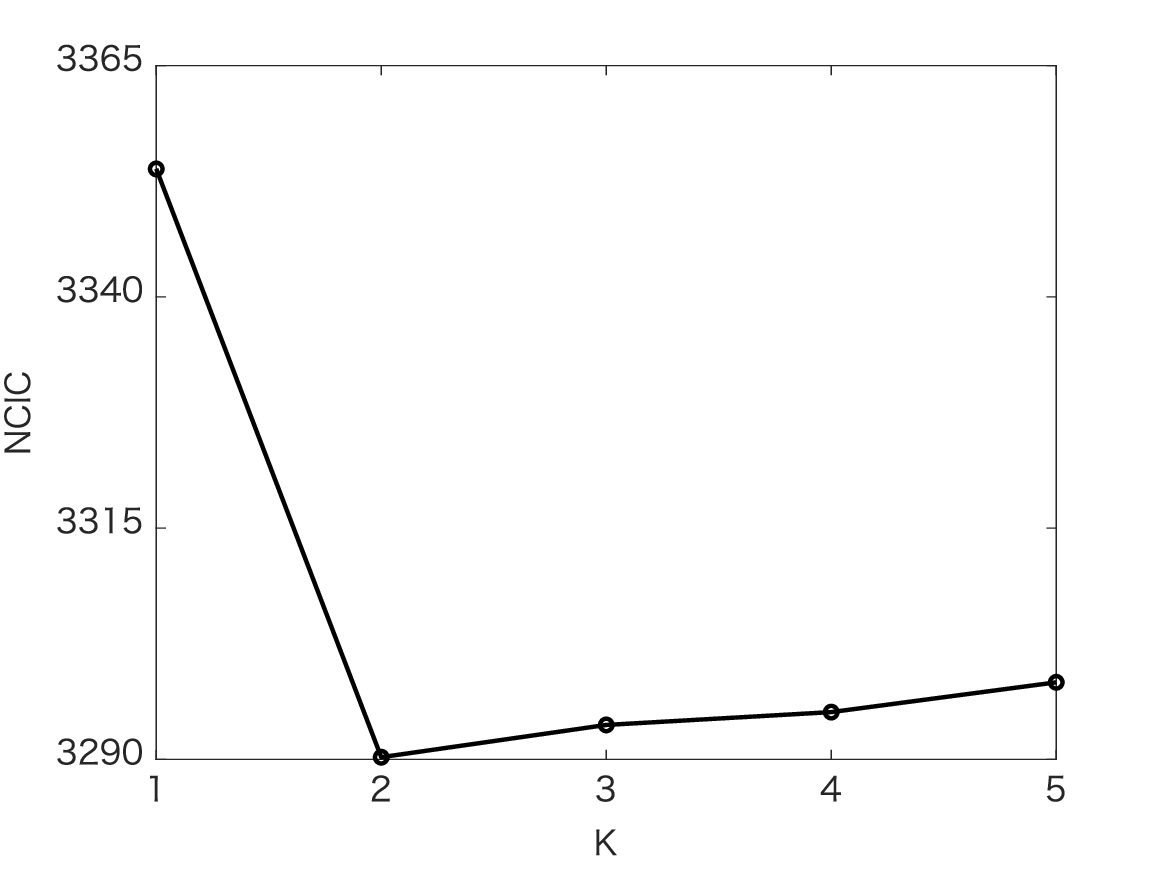}
			\end{center}
		\end{minipage}
		\begin{minipage}{0.45\textwidth}
			\begin{center}
				\includegraphics[width=6cm]{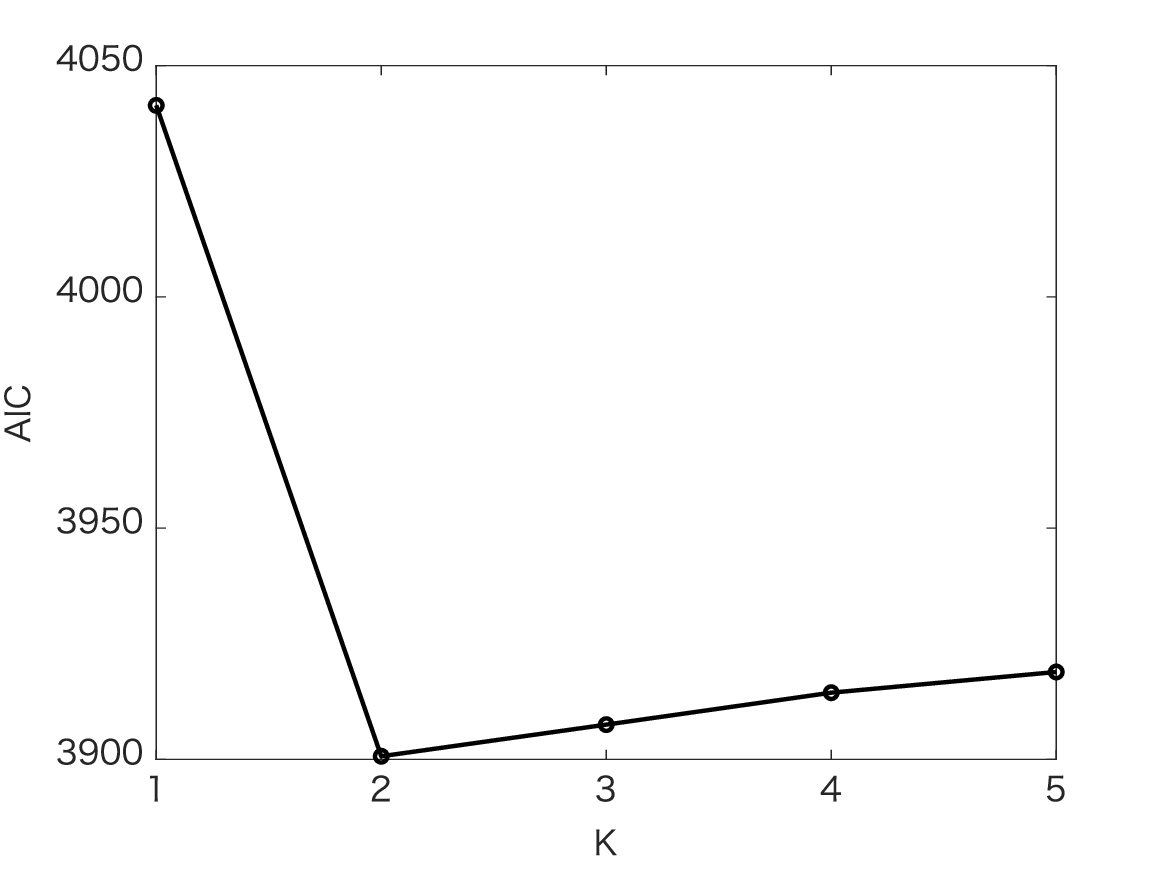}
			\end{center}
		\end{minipage}
		\caption{${\rm NCIC}_2$ (left) and ${\rm AIC}$ (right) for Gaussian mixture models. The true value is $K=2$.}
		\label{fig_gm}
	\end{figure}
	
	In the above, we assumed that the model is regular around $\xi^*$.
	It is not trivial to eliminate this condition due to the singularity in the parameter space of finite mixture models \citep{Mclachlan}.
	It is an interesting future work to develop a rigorous theory of model selection for non-normalized mixture models accounting for singularity \citep{Watanabe21}.

	\section{Conclusion}
	In this study, we developed information criteria for non-normalized models estimated by noise contrastive estimation (NCE) or score matching.
	The proposed criteria are approximately unbiased estimators of discrepancy measures for non-normalized models.
	They provide a principled method of model selection for general non-normalized models.
	We believe that this study increases the practicality of non-normalized models.
	
	Regarding future work, an interesting direction would be to apply NCIC to data-driven selection of neural network architectures \citep{Murata}, which is a constant problem in deep learning. 
	In a sense, the experiment of overcomplete independent component analysis on natural image data in Section~8.1  is viewed as selecting the number of units. 
	In a similar way, NCIC may be applicable to select the number of layers.
	Note that \cite{Gutmann12} applied NCE to train neural networks on natural image data.
	It would be also interesting if we can select from different architectures such as ResNet and CNN.
	Furthermore, since regularization is essential to avoid overfitting in training high-dimensional models including neural networks, it is an important problem to extend NCIC and SMIC to regularized cases such as LASSO \citep{Ninomiya}.
	On the other hand, since recent studies \citep{Fujikoshi,Yanagihara,Bai} have found that AIC attains consistency in high-dimensional settings (in contrary to low-dimensional settings), it is an interesting future work to investigate the consistency of NCIC and SMIC in high-dimensional settings.
	
	\section*{Acknowledgements}
We thank Yoshiyuki Ninomiya and Ricardo Monti for helpful comments. T.M. was supported by JSPS KAKENHI Grant Number 19K20220 and JST Moonshot Grant Number JPMJMS2024. A.H. was supported by a Fellowship from CIFAR.

\end{document}